\newtheorem{theo}{Theorem}[section]
\newtheorem{lemma}[theo]{Lemma}
\newtheorem{propo}[theo]{Proposition}
\newtheorem{defi}[theo]{Definition}
\newtheorem{coro}[theo]{Corollary}
\newtheorem{rem}[theo]{Remark}
\newcommand\Inj{\operatorname{Inj}}
\newcommand\Sat{\operatorname{Sat}}
\newcommand\iso{\operatorname{iso}}
\newcommand\Mod{\operatorname{\bf Mod}}
\newcommand\id{\operatorname{id}}
\newcommand\ap{\operatorname{ap}}
\newcommand\Set{\operatorname{\bf Set}}
\newcommand\Met{\operatorname{\bf Met}}
\newcommand\Ban{\operatorname{\bf Ban}}
\newcommand\CMet{\operatorname{\bf CMet}}
\newcommand\cof{\operatorname{cof}}
\newcommand\ca{\mathcal {A}}
\newcommand\cu{\mathcal {U}}
\newcommand\ch{\mathcal {H}}
\newcommand\ci{\mathcal {I}}
\newcommand\ck{\mathcal {K}}
\newcommand\cl{\mathcal {L}}
\newcommand\cq{\mathcal {Q}}
\newcommand\eps{\varepsilon}
\newcommand\pa{\parallel}
 \newbox\noforkbox \newdimen\forklinewidth
\noforkbox\hbox{\lower 2pt\box1\lower
2pt\box0\relax}
\date{June 15, 2023}
\begin{document}
\title[Enriched purity and presentability in Banach spaces]
{Enriched purity and presentability in Banach spaces}
\author[J. Rosick\'{y}]
{J. Rosick\'{y}}
\thanks{Supported by the Grant Agency of the Czech Republic under the grant 22-02964S} 
\address{
\newline J. Rosick\'{y}\newline
Department of Mathematics and Statistics,\newline
Masaryk University, Faculty of Sciences,\newline
Kotl\'{a}\v{r}sk\'{a} 2, 611 37 Brno,\newline
Czech Republic}
\email{rosicky@math.muni.cz}

\begin{abstract}
The category $\Ban$ of Banach spaces and linear maps of norm 
$\leq 1$ is locally $\aleph_1$-presentable but not locally finitely presentable. We prove, however, that $\Ban$ is locally finitely presentable in the enriched sense over complete metric spaces. Moreover, in this sense, pure morphisms are just ideals of Banach spaces. We characterize classes of Banach spaces approximately injective with respect to sets of morphisms having finite-dimensional domains and separable codomains.
\end{abstract}
\maketitle
\section {Introduction}
A model theoretical approach to Banach spaces (and other structures from functional analysis) is well established (see, e.g., \cite{I}). Our aim
is to place this approach into the framework of the theory of locally presentable and accessible categories. The first steps in this direction were done in \cite{RT} and \cite{AR}.
Accessible categories were introduced by M. Makkai and R. Par\' e \cite{MP}
as a foundation of categorical model theory. Their theory was further developed in \cite{AR1} where, in particular, the importance of injectivity and purity have been understood. To deal with structures from functional analysis, we need this theory to be enriched over complete metric spaces. For instance, \cite{AR} showed that an analytical concept
of approximate injectivity coincides with enriched injectivity. \cite{AR}
also showed that finite-dimensional Banach spaces are finitely generated in
the enriched sense; here, morphisms of Banach spaces are linear maps of norm $\leq 1$. This provides a new perspective to the construction of a Gurarii space.

Our first result improves \cite{AR} by showing that finite-dimensional Banach spaces are even finitely presentable in the enriched sense. This implies that the category of Banach spaces is locally finitely presentable as an enriched category. Let us add that, as an ordinary category, it is only locally $\aleph_1$-presentable. We then prove that pure morphisms of Banach spaces in the enriched sense coincide with the well-established concept of ideals of Banach spaces (\cite{Ra}). We also show that pure
morphisms have the same model-theoretical meaning as pure morphisms in accessible categories when we use the logic of positive bounded formulas of \cite{H,I}. This complements the results of \cite{RT} where purity in metric enriched categories was introduced. Using purity, we characterize classes of Banach spaces approximately injective with respect to sets of morphisms having finite-dimensional domains and separable codomains.

{\bf Acknowledgement.} The author is extremaly grateful to the anonymous referee for very valuable comments and suggestions.
 
\section{Preliminaries}
We denote by $\CMet$ the category of complete metric spaces
and nonexpanding maps as morphisms where we allow distances to be $\infty$. 
This category is symmetric monoidal closed where the tensor product $X\otimes Y$ puts the $+$-metric  
$$
d((x,y),(x',y'))=d(x,x')+d(y,y')
$$ 
on the product $X\times Y$. The internal hom provides the hom-set  $\CMet(X,Y)$ with the sup-metric $d(f,g)=\sup\{d(fx,gx)\;|\;x\in X\}$. Moreover, $\CMet$ is locally $\aleph_1$-presentable (see \cite[2.3(2))]{AR}. A category $\ck$ is enriched over $\CMet$ if hom-sets $\ck(A,B)$ are complete metric spaces and the composition maps   $\ck(B,C)\otimes\ck(A,B)\to\ck(A,C)$ are nonexpanding. Our principal examples are the category $\CMet$ itself and the category $\Ban$ of Banach spaces and linear maps of norm $\leq 1$. A functor $F:\ck\to\cl$ between $\CMet$-enriched categories is \textit{enriched} if the mapping $\ck(A,B)\to\cl(FA,FB)$ is nonexpansive for all objects $A$ and $B$ in $\ck$. An adjunction $U\vdash F$ is \textit{enriched} if $\cl(FK,L)\cong\ck(K,UL)$ is an isomorphism of metric spaces for all objects $K$ in $\ck$ and $L$ in $\cl$. Consult \cite{AR1} for the concept of a locally presentable category and \cite{Ke} for the theory of enriched categories. Enriched categories over $\CMet$ were studied in \cite{RT} and \cite{AR} from where we recall the following concepts.

Given morphisms $f,g:A\to B$ in a $\CMet$-enriched category $\ck$ and $\eps\geq0$, we say that $f\sim_\eps g$ if $d(f,g)\leq\eps$ in the metric space $\ck(A,B)$. An $\varepsilon$-\textit{commutative} square 
$$
\xymatrix@=3pc{
A\ar[r]^{f} \ar[d]_{g} & B\ar[d]^{\bar{g}} \\
C \ar[r]_{\bar{f}} & D
}
$$
is a square such that $\bar{f}g\sim_\eps \bar{g}f$. 

\begin{defi}(\cite[2.2]{RT})
{
\em
Let $\eps\geq 0$. An $\eps$-commutative square
$$
\xymatrix@=3pc{
A\ar[r]^{f} \ar[d]_{g} & B\ar[d]^{\bar{g}} \\
C \ar[r]_{\bar{f}} & D
}
$$
is called an $\eps$-\textit{pushout} if for every $\varepsilon$-commutative square
$$
\xymatrix@=3pc{
A\ar[r]^{f} \ar[d]_{g} & B\ar[d]^{g'} \\
C \ar[r]_{f'} & D'
}
$$
there is a unique morphism $t:D\to D'$ such that $t\overline{f}=f'$ and $t\overline{g}=g'$.
}
\end{defi}
$0$-commutative squares are commutative squares and pushouts are $0$-pushouts.

%\begin{lemma}\label{e-push0}
%Let
%$$
%\xymatrix@=3pc{
%A\ar[r]^{f} \ar[d]_{g} & B\ar[d]^{\bar{g}}\ar[r]^{h}& B' \ar[d]^{\bar{\bar{g}}}\\
%C \ar[r]_{\bar{f}} & D \ar[r]_{\bar{h}} & D'
%}
%$$
%be a diagram in a $\CMet$-enriched category where both squares are $\eps$-pushouts. Then the outer rectangle is an $\eps$-pushout.
%\end{lemma}
%\begin{proof}
%Consider $u:B'\to X$ and $v:C\to X$ such that $uhf\sim_\eps vg$. Then
%there exists a unique $t:D\to X$ such that $t\bar{g}=uh$ and $t\bar{f}=v$.
%Thus there exists a unique $t':D'\to X$ such that $t'\bar{h}=t$ and 
%$t'\bar{\bar{g}}=u$. Hence $t'\bar{h}\bar{f}= t\bar{f}=v$. 

%Consider $w:D'\to X$ such that $w\bar{\bar{g}}=u$ and $w\bar{h}\bar{f}=v$. Since $w\bar{h}\bar{g}=w\bar{\bar{g}}h=uh$, we have $w\bar{h}=t$. Hence
%$w=t'$. We have proved that the outer rectangle is an $\eps$-pushout.
%\end{proof}

\begin{rem}\label{e-push}
{
\em
An $\eps$-pushout in $\Ban$ is a square
$$
\xymatrix@=3pc{
A\ar[r]^{f} \ar[d]_{g} & B\ar[d]^{\overline{g}} \\
C \ar[r]_{\overline{f}} & D
}
$$
where $D$ is the coproduct $B\oplus C$ endowed with the norm
$$
\pa (x,y)\pa=\inf\{\pa b\pa + \pa c\pa + \eps\pa a\pa; x= b+f(a),y=c-g(a)\}
$$
(see \cite[6.5]{DR}). Hence an $\eps$-pushout of finite-dimensional (or separable) Banach spaces is finite-dimensional (separable resp.).
}
\end{rem}
 
\begin{rem}\label{iso}
{
\em
(1) A morphism $f:A\to B$ in a $\CMet$-enriched category $\ck$ is called an \textit{isometry} if for every $u,v:C\to A$ we have $d(u,v)= d(fu,fv)$. Isometries in $\CMet$ or $\Ban$ are the usual isometries.  

(2) Let $\ck$ have pushouts. We say that isometries in $\ck$ are \textit{stable under pushouts} if 
$$
\xymatrix@=3pc{
A\ar[r]^{f} \ar[d]_{g} & B\ar[d]^{\overline{g}} \\
C \ar[r]_{\overline{f}} & D
}
$$ 
is a pushout and $f$ an isometry then $\overline{f}$ is an isometry. Isometries in $\Ban$ are stable under pushouts (see \cite[A.19]{ASCGM}).

Following \cite[3.19]{AR}), isometries in $\Ban$ are also stable under $\eps$-pushouts.

(3) A morphism $f:A\to B$ in a $\CMet$-enriched category $\ck$ is called an $\eps$-\textit{isometry} provided that there are isometries $g:B\to C$ and $h:A\to C$ such that $gf\sim_{\eps} h$. If $\ck$ have $\eps$-pushouts for all $\eps\geq 0$ and isometries are stable under pushouts then, following \cite[3.22]{AR}, $f:A\to B$ is an $\eps$-isometry if and only if in the following $\eps$-pushout 
$$
\xymatrix@=3pc{
A\ar[r]^{f} \ar[d]_{\id_A} & B\ar[d]^{f_\eps} \\
A \ar[r]_{\overline{f}} & P
}
$$ 
$\overline{f}$ is an isometry. (The stability of isometries under pushouts is missing in \cite[3.22]{AR}).
}
\end{rem}

\begin{lemma}\label{e-iso1}
Let $\ck$ be a $\CMet$-enriched category with $\eps$-pushouts for all $\eps\geq 0$. Assume that isometries in $\ck$ are stable under pushouts. Then $f$ is an $\eps$-isometry if and only if there is an isometry $h$ and a morphism $g$ such that $gf\sim_\eps h$.
\end{lemma}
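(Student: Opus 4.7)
The plan is to reduce to the $\eps$-pushout characterization of $\eps$-isometries given in Remark \ref{iso}(3), which is available precisely because we assumed $\ck$ has $\eps$-pushouts and isometries stable under pushouts.

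The forward direction is a triviality: if $f$ is an $\eps$-isometry then by definition there exist isometries $g$ and $h$ with $gf\sim_\eps h$, and an isometry is in particular a morphism, so the weaker conclusion holds. All the work is in the reverse direction.

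For the reverse direction, suppose $h:A\to C$ is an isometry and $g:B\to C$ is a morphism with $gf\sim_\eps h$. I would then consider the $\eps$-pushout
$$
\xymatrix@=3pc{
A\ar[r]^{f} \ar[d]_{\id_A} & B\ar[d]^{f_\eps} \\
A \ar[r]_{\overline{f}} & P
}
$$
and aim to show $\overline{f}$ is an isometry, which by Remark \ref{iso}(3) gives that $f$ is an $\eps$-isometry. The hypothesis $gf\sim_\eps h = h\circ \id_A$ says exactly that the outer square formed by $f,\id_A,g,h$ into $C$ is $\eps$-commutative, so the universal property of the $\eps$-pushout yields a (unique) morphism $t:P\to C$ with $t\overline{f}=h$ and $tf_\eps=g$.

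The final step is to deduce that $\overline{f}$ is an isometry from the fact that $t\overline{f}=h$ is. For any parallel pair $u,v:X\to A$, nonexpansiveness of composition gives $d(\overline{f}u,\overline{f}v)\leq d(u,v)$; on the other hand, since $h=t\overline{f}$ is an isometry and $t$ is nonexpanding, $d(u,v)=d(hu,hv)=d(t\overline{f}u,t\overline{f}v)\leq d(\overline{f}u,\overline{f}v)$. The two inequalities force equality, so $\overline{f}$ is an isometry. There is no real obstacle here beyond remembering that the defining property of an isometry can be pulled back through an arbitrary morphism thanks to nonexpansiveness of the enrichment; this is the small observation that makes the weaker hypothesis on $g$ sufficient.
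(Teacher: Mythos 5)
Your argument is correct and is essentially the paper's: the paper disposes of sufficiency by citing the proof of \cite[Lemma 3.22]{AR}, which is precisely the $\eps$-pushout argument you spell out (and which also appears verbatim in the proof of Lemma~\ref{e-iso}, including the left-cancellability of isometries that you reprove via nonexpansiveness of composition). No gaps.
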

\begin{proof}
Necessity is evident and sufficiency follows from the proof of 
\cite[3.22]{AR}).
\end{proof}

\begin{lemma}\label{e-iso}
Let $\ck$ be a $\CMet$-enriched category with $\eps$-pushouts for all $\eps\geq 0$. Assume that isometries in $\ck$ are stable under pushouts. Let $f\sim_\eps g$ where $g$ is an isometry. Then $f$ is an $\eps$-isometry.
\end{lemma}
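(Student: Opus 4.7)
The plan is to reduce immediately to the criterion supplied by Lemma \ref{e-iso1}, which is custom-built for this situation: an $\eps$-isometry is witnessed as soon as one can post-compose $f$ with \emph{any} morphism (not required to be an isometry itself) so that the composite lies within $\eps$ of some isometry. So I do not need to construct the auxiliary codomain via an $\eps$-pushout by hand; that work has already been absorbed into Lemma \ref{e-iso1}.

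Concretely, suppose $f,g:A\to B$ with $f\sim_\eps g$ and $g$ an isometry. I would verify the hypothesis of Lemma \ref{e-iso1} with the trivial choice $k=\id_B:B\to B$ as the post-composing morphism and $h=g:A\to B$ as the isometry. Then $kf=f$, and by assumption $f\sim_\eps g=h$, so $kf\sim_\eps h$ with $h$ an isometry. Lemma \ref{e-iso1} then concludes that $f$ is an $\eps$-isometry.

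There is essentially no obstacle, since all of the category-theoretic content (the need for $\eps$-pushouts and for stability of isometries under pushouts) is packaged inside Lemma \ref{e-iso1}; that is precisely why Lemma \ref{e-iso1} was isolated before stating the present lemma. The only thing to be careful about is that the weakened criterion of Lemma \ref{e-iso1} really does allow an arbitrary morphism $k$ on the $B$-side, rather than an isometry as in the original definition in Remark \ref{iso}(3); otherwise one would have to replace $\id_B$ by an isometry, which in general need not exist from $B$ into any given space.
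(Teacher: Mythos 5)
Your proof is correct, but it takes a different route from the paper's. The paper does not pass through Lemma~\ref{e-iso1}: it forms the canonical $\eps$-pushout of $f$ along $\id_A$ from Remark~\ref{iso}(3), observes that the $\eps$-commutative square built from $g$ and $\id_B$ induces $t:P\to B$ with $t\overline{f}=g$, concludes that $\overline{f}$ is an isometry by left-cancellability of isometries, and then invokes the pushout characterization of $\eps$-isometries. Your argument instead feeds the pair $(\id_B,\,g)$ into the criterion of Lemma~\ref{e-iso1}, which is a legitimate and shorter reduction; the pushout work is indeed already packaged there. One further remark: your closing caveat about needing the \emph{weakened} criterion (arbitrary morphism on the $B$-side) is actually unnecessary, because $\id_B$ is itself an isometry. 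So the pair $g=\id_B$, $h=g$ already witnesses the original definition of an $\eps$-isometry in Remark~\ref{iso}(3), and the lemma follows directly from that definition without invoking $\eps$-pushouts or stability of isometries at all. In that sense your proof (and the paper's) uses more machinery than the statement strictly requires; the paper's version has the side benefit of exhibiting the isometry $\overline{f}$ in the canonical $\eps$-pushout, which is the form used elsewhere in the text.
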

\begin{proof}
Take the $\eps$-pushout from \ref{iso}(3). Since $g\cdot\id_A\sim_\eps\id_B\cdot f$, there is $t:P\to B$ such that $t\overline{f}=g$. Since isometries are left cancellable, $\overline{f}$ is an isometry. Following \ref{iso}(3), $f$ is an $\eps$-isometry.
\end{proof}

\begin{lemma}\label{e-iso0}
Let $\ck$ be a $\CMet$-enriched category with $\eps$-pushouts for all $\eps\geq 0$. Assume that isometries in $\ck$ are stable under pushouts. If $f:A\to B$ is an $\eps$-isometry and $g:B\to C$ an $\delta$-isometry then $gf$ is an $(\eps+\delta)$-isometry.
\end{lemma}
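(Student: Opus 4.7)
The plan is to unpack the definition of $\eps$-isometry from Remark \ref{iso}(3) for both $f$ and $g$, and then glue the two resulting witnesses into a single one for $gf$ by means of an ordinary pushout. Applying the definition to $f$ produces isometries $u:B\to X$ and $k:A\to X$ with $uf\sim_\eps k$, and applying it to $g$ produces isometries $v:C\to Y$ and $\ell:B\to Y$ with $vg\sim_\delta \ell$. Both $X$ and $Y$ receive $B$ isometrically, so I would form the ordinary pushout of $u$ and $\ell$, producing $\ell':X\to Z$ and $u':Y\to Z$ with $u'\ell=\ell'u$. The assumed stability of isometries under pushouts, applied twice, ensures that both $\ell'$ and $u'$ are isometries; in particular the composite $u'v:C\to Z$ and $\ell' k:A\to Z$ are also isometries.

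It remains to chase estimates in the $\CMet$-enriched category. Composing $vg\sim_\delta \ell$ on the right with $f$ and on the left with the nonexpanding $u'$ yields $u'vgf\sim_\delta u'\ell f$, and $u'\ell f=\ell' uf$ by commutativity of the pushout. Composing $uf\sim_\eps k$ on the left with the nonexpanding $\ell'$ gives $\ell' uf\sim_\eps \ell' k$. The triangle inequality then delivers
$$
(u'v)(gf)\sim_{\eps+\delta}\ell' k,
$$
which is precisely the witness demanded by Remark \ref{iso}(3) for $gf$ to be an $(\eps+\delta)$-isometry.

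The only substantive idea is the pushout construction combining the two isometric extensions of $B$ into a common codomain $Z$; everything else is the triangle inequality together with the fact that composition is nonexpanding in a $\CMet$-enriched category. Accordingly I anticipate no serious obstacle, provided the stability hypothesis is invoked carefully at both places where the pushout produces an isometry.
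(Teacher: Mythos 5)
Your proposal is correct and coincides with the paper's own argument: the paper likewise takes the two isometric witnesses out of $B$, forms their ordinary pushout, uses stability of isometries under pushouts, and chains the same two estimates via nonexpansiveness of composition to get the $(\eps+\delta)$-bound. Only the notation differs.
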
 
\begin{proof}
There are isometries $h_1:B\to D_1$, 
$h_2:A\to D_1$, $h_3:C\to D_2$ and $h_4:B\to D_2$ such that $h_1f\sim_\eps h_2$
and $h_3g\sim_\delta h_4$. Consider a pushout
$$
\xymatrix@=3pc{
B\ar[r]^{h_1} \ar[d]_{h_4} & D_1\ar[d]^{\overline{h}_4} \\
D_2 \ar[r]_{\overline{h}_1} & D
}
$$ 
Then
$$
\overline{h}_1h_3gf\sim_\delta \overline{h}_1h_4f=\overline{h}_4h_1f\sim_\eps \overline{h}_4h_2,
$$
and thus $\overline{h}_1h_3gf\sim_{\eps+\delta}\overline{h}_4h_2$. Since
$\overline{h}_1h_3$ and $\overline{h}_4h_2$ are isometries, $gf$ is
an $(\eps+\delta)$-isometry.
\end{proof} 

\begin{rem}\label{equivalent}
{
\em
(1) A linear mapping $f:A\to B$ between Banach spaces is called an 
$\eps$-\textit{isometry} if
$$
(1-\eps)\pa x\pa\leq\pa fx\pa\leq (1+\eps)\pa x\pa.
$$
This is equivalent to
$$
|\pa fx\pa-\pa x\pa|\leq\eps
$$
for every $x\in A$, $\pa x\pa\leq 1$ (see \cite[7.1(2)]{AR}).
$\eps$-isometries of norm $\leq 1$ are precisely $\eps$-isometries in the sense of \ref{iso}(3) (see \cite[7.2]{AR}). 

(2) There is a stronger concept of an $\eps$-isometry on Banach spaces, namely a linear mapping $f:A\to B$ such that 
$$
(1+\eps)^{-1}\pa x\pa\leq\pa fx\pa\leq (1+\eps)\pa x\pa.
$$
We will call this a \textit{strong $\eps$-isometry}. Since $(1-e)\leq (1+e)^{-1}$,
every strong $\eps$-isometry is an $\eps$-isometry.

(3) For $0<\eps<1$, an $\eps$-isometry is a strong 
$\frac{\eps}{1-\eps}$-isometry. Indeed, $(1+\frac{\eps}{1-\eps})^{-1}=1-\eps$ and $\eps\leq\frac{\eps}{1-\eps}$.

(4) Every $\eps$-isometry is injective. Indeed, if $\pa fx\pa=0$ then
$\pa x\pa=0$.
}
\end{rem}

For a $\CMet$-enriched category $\ck$, the arrow category $\ck^\to$ is a $\CMet$-enriched category with the hom-space $\ck^\to(f,g)$ of $f:A\to B$ and $g:C\to D$ defined by the following pullback in $\Met$
$$
		\xymatrix@=3pc{
			\ck^\to(f,g) \ar [r]^{}\ar[d]_{} & \ck(A,C) \ar[d]^{\ck(A,g)} \\
			 \ck(B,D)  \ar [r]_{\ck(f,D)}& \ck(A,D)
		}
		$$ 
Recall that objects of $\ck^\to$ are morphisms $f:A\to B$ of $\ck$ and morphisms $(u,v):f\to g$ are	given by commutative squares
$$
		\xymatrix@=3pc{
			A \ar [r]^{f}\ar[d]_{u} & B \ar[d]^{v}\\
			 C  \ar [r]_{g}& D
		}
		$$ 
 
\section{Finite presentability}
In an ordinary category $\ck$, an object $A$ is finitely presentable if its hom-functor $\ck(A,-):\ck\to\Set$ preserves directed colimits. If $\ck$ is enriched over $\CMet$, an object $A$ is finitely presentable (in the enriched sense) if its hom-functor $\ck(A,-):\ck\to\CMet$ preserves directed colimits. While only the trivial Banach space $\{0\}$ is finitely presentable in the ordinary sense (see \cite[2.7(1)]{AR}), we prove that finite-dimensional Banach spaces are finitely presentable (in the enriched sense). This improves \cite[7.6]{AR} which showed that hom-functors $\ck(A,-):\Ban\to\CMet$ with $A$ finite-dimensional preserve directed colimits of isometries.

\begin{theo}\label{finpres}
Finite-dimensional Banach spaces are finitely presentable in the enriched sense.
\end{theo}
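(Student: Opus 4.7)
The plan is to verify that for a finite-dimensional Banach space $A$ and any directed diagram $(B_i,\phi_{ij})$ in $\Ban$ with colimit $(B,\phi_i)$, the canonical comparison map
\[
\Phi\colon\colim_{i\in I}\Ban(A,B_i)\longrightarrow\Ban(A,B)
\]
is an isomorphism in $\CMet$. Since $\Ban(A,B)$ is complete and $\CMet$-colimits are built by completing the ``set-theoretic'' colimit under the induced pseudometric, this reduces to proving that $\Phi$ is an isometry with dense image. Two ingredients drive the argument: the explicit description of the norm in a directed colimit of Banach spaces---for $b\in B_i$, $\|\phi_i(b)\|_B=\inf_{j\geq i}\|\phi_{ij}(b)\|_{B_j}$, a decreasing infimum because the transition maps are nonexpansive---together with the compactness of the unit ball of $A$, which powers Dini's theorem.

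For density, let $f\colon A\to B$ with $\|f\|\leq 1$ and $\eps>0$. Pick an Auerbach basis $a_1,\ldots,a_n$ of $A$. Density of $\bigcup_i\phi_i(B_i)$ in $B$, combined with directedness of $I$, yields a single index $i_0$ and lifts $v_1,\ldots,v_n\in B_{i_0}$ with $\|\phi_{i_0}(v_k)-f(a_k)\|_B$ arbitrarily small. The linear extension $\tilde g_{i_0}\colon A\to B_{i_0}$ defined by $a_k\mapsto v_k$ then satisfies $\phi_{i_0}\tilde g_{i_0}\approx f$. The obstacle is that the norms $\|v_k\|_{B_{i_0}}$ may substantially exceed $\|\phi_{i_0}(v_k)\|_B$, so $\|\tilde g_{i_0}\|$ is not a priori bounded by $1$. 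The fix is to push $\tilde g_{i_0}$ forward along the diagram: set $\tilde g_j=\phi_{i_0 j}\tilde g_{i_0}\colon A\to B_j$ for $j\geq i_0$. The continuous functions $a\mapsto\|\tilde g_j(a)\|_{B_j}$ decrease monotonically in $j$ to the continuous function $a\mapsto\|\phi_{i_0}\tilde g_{i_0}(a)\|_B$, and Dini's theorem on the compact unit ball of $A$ promotes pointwise to uniform convergence. Hence for some $j$ one has $\|\tilde g_j\|$ within $\eps$ of $\|\phi_{i_0}\tilde g_{i_0}\|\leq 1+\eps$; a small rescaling then produces $g\colon A\to B_j$ of norm $\leq 1$ with $\|\phi_j g-f\|<\eps$ (after absorbing constants).

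For the isometry statement, take $g\in\Ban(A,B_i)$ and $h\in\Ban(A,B_j)$. The distance between their classes in the colimit is $\inf_{k\geq i,j}\sup_a\|\phi_{ik}g(a)-\phi_{jk}h(a)\|_{B_k}$, while their distance in $\Ban(A,B)$ is $\sup_a\|\phi_i g(a)-\phi_j h(a)\|_B$, with $a$ ranging over the unit ball of $A$. The inequality $\geq$ is immediate from the contractivity of the $\phi_k$. For the reverse, fix $a$: by the colimit norm formula, $\|\phi_i g(a)-\phi_j h(a)\|_B=\inf_k\|\phi_{ik}g(a)-\phi_{jk}h(a)\|_{B_k}$, a monotone limit of continuous functions of $a$, to which Dini's theorem applies once more, justifying the exchange of $\inf_k$ with $\sup_a$.

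The principal difficulty is the norm-control step inside the density proof: after the algebraic lift $\tilde g_{i_0}$, its operator norm is not controlled, and the remedy of pushing further along the directed system hinges on compactness of the unit ball of $A$ together with the explicit colimit-norm formula. This is precisely the extra content beyond \cite[7.6]{AR}, where all transition maps are isometries and no such rescaling is needed.
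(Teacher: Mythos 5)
Your proposal is correct and follows essentially the same route as the paper: reduce both the density and the isometry conditions to exchanging an infimum over the directed index set with a supremum over the compact unit ball of $A$, using the explicit colimit-norm formula and Dini's theorem, and then rescale the approximate lift to bring its operator norm down to $1$. The only cosmetic differences are your use of an Auerbach basis in place of an arbitrary basis with a norm-equivalence constant, and applying the net form of Dini's theorem directly rather than first reducing to a cofinal $\omega$-chain.
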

\begin{proof}
Let $A$ be finite-dimensional and $(k_{ij}:K_i\to K_j)_{i\leq j\in I}$ be a directed diagram in $\Ban$ with a colimit $(k_i:K_i\to K)_{i\in I}$. 
Following \cite[2.5]{AR}, we have to prove the following
\begin{enumerate}
\item Given $f:A\to K$ and $\eps>0$ then there are $i\in I$ and $g:A\to K_i$  
 such that $k_ig\sim_\eps f$.
\item Given $f,g:A\to K_i$ then 
$$
\pa k_if-k_ig\pa=\inf_{j\geq i}\pa k_{ij}f-k_{ij}g\pa.
$$
\end{enumerate}
We can assume that $I$ is a well-ordered chain (see \cite[1.7]{AR1}). If the cofinality $\cof(I)$ of $I$ is $>\aleph_0$ then $I$ is $\aleph_1$-directed and the result follows from the $\aleph_1$-presentability of finite-dimensional Banach spaces (\cite[2.7(2)]{AR}). Assume that $\cof(I)=\aleph_0$ and let $i_0,i_1\dots,i_t,\dots$ be a cofinal chain in $I$. 

The proof of (1): Every morphism $f:A\to K$ has a factorization $f=f_2f_1$ where $f_2:B\to K$ is an isometry and $f_1:A\to B$ is surjective. Thus $B$ is finite-dimensional provided that $A$ is finite-dimensional and therefore,
we can assume, without a loss of generality, that $f$ is an isometry. 
At first, we prove
 
($\ast$) For every $\varepsilon>0$ there exist $i\in I$ and an $\varepsilon$-isometry $f^\ast\colon A\to K_i$ (not necessarily of norm $\leq 1$) such that $f\sim_\varepsilon k_if^\ast$.  
 
Let $e_1,\dots,e_n$ be a basis of $A$. Since any two norms on a finite-dimensional Banach space are equivalent, there is a number $r$ such that 
$$
\sum_{0<m\leq n} |a_m|\leq r\parallel\sum_{0<m\leq n} a_me_m\parallel
$$
for all $a_1,\dots,a_n\in\Bbb C$.
Let $\delta = \frac{\varepsilon}{2r}$. Following 
\cite[2.5(4)]{AR}, there are elements 
$u_1,\dots,u_n\in K_i$, $i<\mu$, such that 
$$
\parallel k_iu_m-fe_m\parallel\leq\delta
$$
for $m=1,\dots, n$. We can assume that $i=i_0$. Let $f':A\to K_i$ be the linear mapping such that $f'e_m=u_m$ for $m=1,\dots,n$. 
We have
\begin{align*}
\parallel(k_if'-f)(\sum_{0<m\leq n} a_me_m)\parallel &\leq\sum_{0< m\leq n} |a_m|\parallel(k_if'-f)(e_j)\parallel\leq\sum_{0< m\leq n}|a_m|\delta\\
&\leq\frac{\varepsilon}{2}\parallel\sum_{0<m\leq n} a_me_m\parallel.
\end{align*}
Hence $k_if'\sim_\frac{\varepsilon}{2} f$.

Since $\pa k_{ii_t}f'\pa$ converges pointwise to $\pa k_if'\pa$ (see \cite[2.5(4)]{AR}) and the closed unit ball $A_1$ of $A$ is compact, the convergence is uniform on $A_1$ following Dini's theorem (see \cite[p. 165]{E}). Hence
$$
\pa k_{ii_t}f'\pa -\pa k_if'\pa\leq \frac{\eps}{2}
$$ 
for some $t$. We will show that $f^\ast=k_{ii_t}f':A\to K_{i_t}$ is an $\eps$-isometry. Since $f^\ast$ does not need to have norm $\leq 1$, we
cannot use \ref{e-iso1} but we will apply \ref{equivalent}(1).

Let $a=\sum\limits_{0<m\leq n} a_me_m\in A_1$ and $\pa a\pa\leq 1$. We have
\begin{align*}
|\pa f^\ast a\pa -\pa a\pa|
&=|\pa k_{ii_t}f'a\pa -\pa k_if'a\pa +\pa k_if'a\pa -\pa a\pa|\\
&\leq |\pa k_{ii_t}f'a\pa -\pa k_if'a\pa| +|\pa k_if'a\pa -\pa a\pa|\\
&\leq \frac{\eps}{2}+|\pa k_if'a\pa -\pa fa\pa|\leq\frac{\eps}{2}+\frac{\eps}{2}\pa a\pa\leq\eps.
\end{align*}
Hence $f^\ast$ is an $\eps$-isometry and 
$$
f\sim_{\frac{\eps}{2}} k_if'=k_{i_t}k_{ii_t}f'=k_{i_t}f^\ast.
$$ 
This proves ($\ast$).

Now, we prove (1). We take $f^\ast$ from ($\ast$) for $\varepsilon'= \frac{\varepsilon}{2}$. Let $\parallel\sum_{0<j\leq n}a_je_j\parallel=1$. 
Since $f^\ast$ is an $\eps'$-isometry, following \ref{equivalent}(1), we have
$$
|\parallel f^\ast(\sum_{0<j\leq n}a_je_j)\parallel -1|\leq\varepsilon'.
$$
If $\parallel f^\ast(\sum_{0<j\leq n}a_je_j)\parallel\geq 1$ then $\parallel f^\ast(\sum_{0<j\leq n}a_je_j)\parallel\leq 1+\varepsilon'$.
If $\parallel f^\ast(\sum_{0<j\leq n}a_je_j)\parallel \leq 1$ then, again, $\parallel f^\ast(\sum_{0<j\leq n}a_je_j)\parallel\leq 1+\varepsilon'$. We have proved that $\parallel f^\ast\parallel\leq 1+\varepsilon'$.
Hence $g= \frac{1}{1+\varepsilon'} f^\ast$ is a morphism in $\Ban$.

For $a=\sum_{0<j\leq n}a_je_j$ we have
$$
\parallel  f^\ast a - ga\parallel = \frac{1}{1+\varepsilon'} \parallel (1+\varepsilon')f^\ast a - f^\ast a\parallel = \frac{\varepsilon'}{1+\varepsilon'}\parallel f^\ast a\parallel\leq\frac{\varepsilon'}{1+\varepsilon'}(1+\varepsilon')\parallel a\parallel=\varepsilon'\parallel a\parallel.
$$
Hence $\parallel f^\ast -g\parallel\leq\varepsilon'$ and thus $\parallel k_{i_t}f^\ast -k_{i_t}g\parallel\leq\varepsilon'$. Since
$\parallel k_{i_t}f^\ast - f\parallel\leq\varepsilon'$, we have $\parallel k_{i_t}g - f\parallel\leq\varepsilon$.
 
The proof of (2): We have to show that
$$
\pa k_if-k_ig\pa=\inf_{j\geq i}\pa k_{ij}f-k_{ij}g\pa
$$
for $f,g:A\to K_i$. This means that   
$$
\pa k_if-k_ig\pa=\lim_t\pa k_{ii_t}f-k_{ii_t}g\pa.
$$
Following \cite[2.5(4)]{AR}, $\pa k_{ii_t}(f-g)\pa$ converges pointwise
to $\pa k_i(f-g)\pa$ on the closed unit ball $A_1$ of $A$. Using Dini's theorem, we get that the convergence is uniform. Hence 
$$
\pa k_i(f-g)\pa =\lim_t\pa k_{ii_t}(f-g)\pa .
$$
\end{proof}

Locally finitely presentable enriched categories were introduced in \cite{Ke1}. Since $\CMet$ is not locally finitely presentable as an ordinary category, \cite{Ke1} does not fully apply to $\CMet$-enriched categories. 

We say that a $\CMet$-enriched category is \textit{locally finitely presentable} if it has all weighted colimits and a set $\ca$ of finitely presentable objects (in the enriched sense) such that every object is a directed colimit of objects of $\ca$. 

These categories are locally finitely presentable in the sense of \cite{Ke1} because they have a strong generator consisting of finitely presentable objects. However, $\CMet$ is locally finitely presentable
in the sense of \cite{Ke1} (see (3.4) in this paper) but not locally finitely presentable in our sense.

\begin{coro}\label{finpres1}
$\Ban$ is locally finitely presentable as a category enriched over $\CMet$.
\end{coro}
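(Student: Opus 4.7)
The plan is to verify the two conditions in the definition of locally finitely presentable $\CMet$-enriched category just stated. Let $\ca$ be a set of representatives of the isometry classes of finite-dimensional Banach spaces; this is indeed a set, since $n$-dimensional Banach spaces are parametrized by norms on $\mathbb{C}^n$. By Theorem \ref{finpres}, every object of $\ca$ is finitely presentable in the enriched sense.

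First I would show that every Banach space $B$ is an enriched directed colimit of its finite-dimensional subspaces. The poset of finite-dimensional linear subspaces of $B$, ordered by inclusion, is directed; since their union is dense in $B$ and $B$ is complete, $B$ is the ordinary colimit of this diagram in $\Ban$. That the cocone is also enriched-colimiting can be verified using the two clauses of \cite[2.5]{AR} invoked in the proof of Theorem \ref{finpres}: clause (1) holds because any element of $B$ lies within any prescribed $\eps>0$ of some finite-dimensional subspace, and clause (2) is immediate since the inclusions are isometries.

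Second, I would confirm that $\Ban$ has all $\CMet$-weighted colimits. It has all conical colimits since it is ordinarily cocomplete, so by the standard reduction it suffices to construct tensors. Given $X \in \CMet$ and $A \in \Ban$, I would build $X \otimes A$ as the completion of the free complex vector space on the set $X \times A$ modulo linearity in the second variable, equipped with the largest seminorm satisfying $\pa (x,a)\pa\leq\pa a\pa$ and $\pa (x,a)-(y,a)\pa\leq d(x,y)\pa a\pa$. Morphisms $X \otimes A \to B$ in $\Ban$ then correspond isometrically to nonexpanding maps $X \to \Ban(A,B)$, which is precisely the universal property of a tensor.

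The main obstacle, granting Theorem \ref{finpres}, is the construction and verification of the tensors; the set of generators and the directed-colimit presentation are essentially formal. Once tensors and conical colimits are in place, every weighted colimit is obtained by the standard reduction, and both conditions of the definition are met.
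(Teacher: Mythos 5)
Your overall architecture is the same as the paper's: take the finite-dimensional spaces as the set $\ca$ of finitely presentable generators (via Theorem \ref{finpres}), observe that every Banach space is an enriched directed colimit of its finite-dimensional subspaces, and supply enriched cocompleteness. The first two points are fine (and your verification of the enriched colimit conditions for the subspace diagram is a detail the paper glosses over). The paper, however, disposes of cocompleteness in one line by citing \cite[4.5(3)]{AR}; everything you add beyond that citation is your explicit construction of tensors, and that is exactly where the proposal has a genuine gap.

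The problem is the sentence ``morphisms $X\otimes A\to B$ in $\Ban$ then correspond isometrically to nonexpanding maps $X\to\Ban(A,B)$.'' Your construction does yield a natural \emph{bijection}, and that bijection is nonexpanding in the direction $\Ban(X\otimes A,B)\to\CMet(X,\Ban(A,B))$; but the reverse inequality, which is what ``isometrically'' (i.e., an isomorphism in $\CMet$, hence the enriched universal property of a tensor) requires, does not follow from the maximality of the seminorm and in fact fails for your object. Take $X=\{x,y\}$ with $d(x,y)=1$ and $A=B=\Bbb C$. Then $X\otimes A$ is $\Bbb C^2$ with unit ball the closed absolutely convex hull of $e_x$, $e_y$ and $e_x-e_y$, so a functional $\phi=(u,v)$ lies in $\Ban(X\otimes A,\Bbb C)$ iff $|u|\le 1$, $|v|\le 1$ and $|u-v|\le 1$, and the operator-norm distance of two such functionals is $\max(|u-u'|,|v-v'|,|(u-v)-(u'-v')|)$, whereas the sup-metric on $\CMet(X,\Ban(\Bbb C,\Bbb C))$ gives only $\max(|u-u'|,|v-v'|)$. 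For $\phi=(\tfrac12,-\tfrac12)$ and $\psi=(-\tfrac12,\tfrac12)$ these values are $2$ and $1$, so the comparison map is not an isometry and the constructed object does not satisfy the enriched universal property you claim for it. Worse, the defect is not specific to your construction: the isometry condition tested against $\psi=0$ forces $\pa\phi\pa=\max(|u|,|v|)$ on the unit ball of any candidate representing object, and then rescaling $(\tfrac12,-\tfrac12)$ by $2$ would have to land on the pair $(1,-1)$, which violates $|u-v|\le 1$. So the one step of the corollary that the paper does not reprove from scratch --- the existence of all weighted colimits in $\Ban$ --- is precisely the step your argument does not deliver; you should consult \cite[4.5(3)]{AR} for the precise sense in which that cocompleteness is meant and cite it, rather than attempt the tensors by hand.
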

\begin{proof}
$\Ban$ has all weighted colimits (see \cite[4.5(3)]{AR}. Thus the result
follows from \ref{finpres} and the fact that every Banach space is a directed colimit of finite-dimensional spaces.
\end{proof}

\begin{rem}\label{finpres2}
{
\em
\cite[7.7]{AR} characterizes Banach spaces $A$ such that $\Ban(A,-):\Ban\to\CMet$ preserves directed colimits of isometries
as those admitting for every $\eps>0$ a morphism $u:A\to A_0$ to a finite-dimensional Banach space with $r:A_0\to A$ such that $ru\sim_\eps\id_A$. These Banach spaces do not need to be finite-dimensional and satisfy (1) above for every directed colimit $(k_i:K_i\to K)$. Indeed, for $f:A\to K$ and $\eps>0$, we take $u:A\to A_0$
and $r:A_0\to A$ with $A_0$ finite-dimensional and $ru\sim_{\frac{\eps}{2}}\id_A$.
Following \ref{finpres}, there is $g:A_0\to K_i$ such that $k_ig\sim_{\frac{\eps}{2}} fr$. Then
$$
k_igu\sim_{\frac{\eps}{2}}fru\sim_{\frac{\eps}{2}} f.
$$ 
Hence $k_igu\sim_\eps f$.

We do not know whether these Banach spaces $A$ can satisfy (2) as well.
So, we do not know whether finitely presentable Banach spaces are finite-dimensional.
}
\end{rem}

\begin{propo}\label{adj1}
Let $U:\ck\to\cl$ and $F:\cl\to\ck$ be an enriched adjunction (where $F$ is a left adjoint) such that $U$ preserves directed colimits. Then $F$ preserves finitely presentable objects (in the enriched sense).
\end{propo}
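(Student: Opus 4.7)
The plan is to run the standard enriched adjunction argument verbatim, merely checking that every isomorphism in sight lives in $\CMet$ (not just in $\Set$). Fix a finitely presentable object $L$ of $\cl$; I want to show that $\ck(FL,-)\colon\ck\to\CMet$ preserves directed colimits. Let $(k_i\colon K_i\to K)_{i\in I}$ be a colimit of a directed diagram in $\ck$. Because the adjunction $F\dashv U$ is enriched, for every object $K'$ of $\ck$ there is an isomorphism of metric spaces
\[
\varphi_{K'}\colon\ck(FL,K')\xrightarrow{\ \cong\ }\cl(L,UK'),
\]
natural in $K'$.

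Applying $U$ to the colimiting cocone gives a cocone $(Uk_i\colon UK_i\to UK)$, and by hypothesis $U$ preserves directed colimits, so this cocone is itself a colimit in $\cl$. Because $L$ is finitely presentable in the enriched sense, the hom-functor $\cl(L,-)\colon\cl\to\CMet$ turns this colimit into a colimit in $\CMet$. Composing with the isometries $\varphi$ then yields the chain of isomorphisms of metric spaces
\[
\colim_i\ck(FL,K_i)\xrightarrow{\ \cong\ }\colim_i\cl(L,UK_i)\xrightarrow{\ \cong\ }\cl(L,UK)\xrightarrow{\ \cong\ }\ck(FL,K),
\]
and a straightforward check of the cocone maps shows that this composite isomorphism is exactly the canonical comparison map. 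Hence $\ck(FL,-)$ preserves directed colimits, so $FL$ is finitely presentable in the enriched sense.

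There is essentially no obstacle here beyond bookkeeping; the only point that deserves a moment's care is verifying that the naturality isometries $\varphi$ assemble into a morphism of cocones (so that the composite really is the canonical comparison), and that each intermediate $\cong$ is an isomorphism in $\CMet$ and not merely a bijection of underlying sets — both of which follow immediately from the hypothesis that the adjunction is enriched and from the definition of finite presentability in the enriched sense.
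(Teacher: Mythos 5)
Your argument is correct and is essentially the paper's own proof: both use the natural isometric isomorphism $\ck(FL,-)\cong\cl(L,U-)$ coming from the enriched adjunction, together with the hypotheses that $U$ and $\cl(L,-)$ preserve directed colimits. The paper simply states this in one line, whereas you spell out the comparison of cocones; no substantive difference.
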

\begin{proof}
Let $A$ be finitely presentable in $\cl$. Since
$$
\ck(FA,-)\cong\cl(A,U-)
$$
and $\cl(A,U-)$ preserves directed colimits, $FA$ is finitely presentable.
\end{proof}

\section{Purity}
A morphism $f:K\to L$ in an ordinary locally finitely presentable category $\ck$ is pure if in every commutative square
$$
\xymatrix@=3pc{
K \ar[r]^{f} & L \\
A\ar [u]^{u} \ar [r]_{g} & B \ar[u]_{v}
}
$$
with $A$ and $B$ finitely presentable there exists $t:B\to K$ such that $tg=u$. Pure morphisms are precisely directed colimits of split monomorphisms in $\ck^\to$ (see \cite[2.30]{AR1}). Recall that a monomorphism $f:K\to L$ is split if there exists $s:L\to K$ such that $sf=\id_K$.  

Since $\Ban$ is not locally finitely presentable as an ordinary category, one cannot expect that pure morphisms can be defined in this way.  

\begin{defi}\label{pure}
{
\em
A morphism $f:K\to L$ in $\Ban$ is  \textit{pure} if for every  $\eps>0$ and every commutative square 
$$
\xymatrix@=3pc{
K \ar[r]^{f} & L \\
A\ar [u]^{u} \ar [r]_{g} & B \ar[u]_{v}
}
$$
with $A$ and $B$ finite-dimensional, there exists $t:B\to K$ such that   $tg\sim_\eps u$.  
}
\end{defi}  

In \cite[5.2]{RT}, these morphisms were called barely ap-pure.

\begin{lemma}\label{pure2}
A morphism $f:K\to L$ in $\Ban$ is pure if and only if for every $\eps>0$
and every $\eps$-commutative square
$$
\xymatrix@=3pc{
K \ar[r]^{f} & L \\
A\ar [u]^{u} \ar [r]_{g} & B \ar[u]_{v}
}
$$
with $A$ and $B$ finite-dimensional, there exists $t:B\to K$ such that $tg\sim_{2\varepsilon} u$. 
\end{lemma}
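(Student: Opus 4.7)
The plan is to verify the two directions separately. The reverse implication is immediate: a commutative square is $\delta$-commutative for every $\delta > 0$, so feeding a commutative square from Definition~\ref{pure} into the $\eps$-commutative hypothesis at parameter $\eps/2$ produces $t$ with $tg \sim_\eps u$ as required.

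For the forward implication, the natural idea is to convert the given $\eps$-commutative square to a genuine commutative square by forming an $\eps$-pushout of $u$ and $g$, obtaining a unique map to $L$ from the universal property of the $\eps$-pushout, and then invoking purity. The main obstacle is that purity (Definition~\ref{pure}) demands both lower corners be finite-dimensional, while the direct $\eps$-pushout of $u : A \to K$ and $g : A \to B$ in $\Ban$ lives on $B \oplus K$ (per Remark~\ref{e-push}) and is infinite-dimensional. My plan is to sidestep this by first factoring $u$ through its image: set $A' = u(A) \subseteq K$, a finite-dimensional subspace, and write $u = \iota u_0$ with $\iota : A' \hookrightarrow K$ the isometric inclusion and $u_0 : A \to A'$ (which has norm $\leq 1$ since $u$ does).

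I would then form the $\eps$-pushout of $u_0$ and $g$, producing comparison maps $\bar g_0 : A' \to P_0$ and $\bar u_0 : B \to P_0$ with $\bar g_0 u_0 \sim_\eps \bar u_0 g$; by Remark~\ref{e-push}, $P_0$ is finite-dimensional. The $\eps$-commutativity $vg \sim_\eps fu = f \iota u_0$ places us inside the hypothesis of the universal property, yielding a unique $\tau : P_0 \to L$ with $\tau \bar g_0 = f\iota$ and $\tau \bar u_0 = v$. The square with verticals $u$ and $\tau$, top edge $f$, and bottom edge $\bar g_0 u_0 : A \to P_0$ then commutes on the nose (since $\tau \bar g_0 u_0 = f \iota u_0 = fu$) and has both lower objects finite-dimensional, so it is eligible input to the purity of $f$.

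The final step is to invoke purity of $f$ on this square at parameter $\eps$, producing $s : P_0 \to K$ with $s \bar g_0 u_0 \sim_\eps u$, and to define $t := s \bar u_0 : B \to K$. Nonexpansiveness of composition carries the $\eps$-pushout relation $\bar g_0 u_0 \sim_\eps \bar u_0 g$ to $s \bar g_0 u_0 \sim_\eps s \bar u_0 g = tg$, and the triangle inequality delivers $tg \sim_{2\eps} u$, completing the argument.
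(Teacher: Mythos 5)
Your proof is correct and follows essentially the same route as the paper: rectify the $\eps$-commutative square via an $\eps$-pushout with finite-dimensional vertex (Remark~\ref{e-push}), apply purity to the resulting genuine commutative square, and finish with nonexpansiveness of composition plus the triangle inequality. The only difference is cosmetic: the paper takes the $\eps$-pushout of $g$ along $\id_A$ rather than along the corestriction $u_0$ of $u$ to its image, which makes your factorization through $u(A)$ unnecessary, though harmless.
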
 
\begin{proof}
Sufficiency is evident. Let $f$ be pure and consider an 
$\eps$-commutative square
$$
\xymatrix@=3pc{
K \ar[r]^{f} & L \\
A\ar [u]^{u} \ar [r]_{g} & B \ar[u]_{v}
}
$$
with $A$ and $B$ finite-dimensional. Consider an $\eps$-pushout
$$
\xymatrix@=3pc{
A \ar[r]^{\overline{g}} & C \\
A\ar [u]^{\id_A} \ar [r]_{g} & B \ar[u]_{g_{\eps}}
}
$$
There is a unique morphism $t:C\to L$ such that $t\overline{g}=fu$ and
$tg_\eps=v$. Thus we get a commutative square
$$
\xymatrix@=3pc{
K \ar[r]^{f} & L \\
A\ar [u]^{u} \ar [r]_{\overline{g}} & C \ar[u]_{t}
}
$$
Following \ref{e-push}, $C$ is finite-dimensional and thus there exists $w:C\to K$ such that $w\overline{g}\sim_\eps u$. Hence
$$
wg_\eps g\sim_\eps w\overline{g}\sim_\eps u.
$$
Hence $wg_\eps g\sim_{2\eps } u$.  
\end{proof}

\begin{rem}\label{weakly}
{
\em
In the terminology of \cite{RT}, this means that barely ap-pure and weakly ap-pure mophisms coincide. The same holds for barely $\lambda$-ap-pure and weakly $\lambda$-ap-pure morphisms.
}
\end{rem}

\begin{lemma}\label{pure3}
A morphism $f:K\to L$ in $\Ban$ is  \textit{pure} if and only if for every  $\eps>0$ and every commutative square 
$$
\xymatrix@=3pc{
K \ar[r]^{f} & L \\
A\ar [u]^{u} \ar [r]_{g} & B \ar[u]_{v}
}
$$
with $A$ and $B$ finitely presentable in the enriched sense, there exists $t:B\to K$ such that   $tg\sim_\eps u$.  
\end{lemma}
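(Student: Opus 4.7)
The plan is straightforward. Sufficiency is immediate from Theorem \ref{finpres}: since finite-dimensional Banach spaces are finitely presentable in the enriched sense, any witness for the enriched-presentable condition is in particular a witness for the finite-dimensional condition of Definition \ref{pure}. For necessity, I would reduce a square with enriched-presentable corners to a finite-dimensional one using approximate retractions that are automatically available from enriched finite presentability.

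The key observation is that finite presentability in the enriched sense yields approximate retracts by finite-dimensional spaces. Writing $A$ as a directed colimit of finite-dimensional Banach spaces $(a_i\colon A_i\to A)$ (possible by Corollary \ref{finpres1}), preservation of this colimit by $\Ban(A,-)$ provides, for every $\delta>0$, an index $i$ and a morphism $\alpha\colon A\to A_i$ with $a_i\alpha\sim_\delta \id_A$ (this is criterion~(1) of \cite[2.5]{AR} applied to $f=\id_A$). Writing $A_0=A_i$ and $\alpha'=a_i$, one obtains $\alpha'\alpha\sim_\delta \id_A$ with $A_0$ finite-dimensional. The same argument applied to $B$ yields $\beta\colon B\to B_0$ and $\beta'\colon B_0\to B$ with $B_0$ finite-dimensional and $\beta'\beta\sim_\delta \id_B$.

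Given $\eps>0$ and the commutative square with $A,B$ finitely presentable in the enriched sense, I would set $\delta=\eps/4$, extract the two retractions above, and form the outer square
$$
\xymatrix@=3pc{
K \ar[r]^{f} & L \\
A_0 \ar[u]^{u\alpha'} \ar[r]_{\beta g\alpha'} & B_0 \ar[u]_{v\beta'}
}
$$
This square is $\delta$-commutative: since $\beta'\beta\sim_\delta \id_B$ and composition in a $\CMet$-enriched category is nonexpansive in each variable, we have $v\beta'\beta g\alpha' \sim_\delta vg\alpha' = fu\alpha'$. Applying Lemma \ref{pure2} to this finite-dimensional $\delta$-commutative square produces $t_0\colon B_0\to K$ with $t_0\beta g\alpha' \sim_{2\delta} u\alpha'$. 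Setting $t=t_0\beta\colon B\to K$, the chain of approximations
$$
tg \;=\; t_0\beta g \;\sim_\delta\; t_0\beta g\alpha'\alpha \;\sim_{2\delta}\; u\alpha'\alpha \;\sim_\delta\; u,
$$
where the first and third steps use $\alpha'\alpha\sim_\delta \id_A$ together with the nonexpansiveness of composition, yields $tg\sim_{4\delta}u$, i.e.\ $tg\sim_\eps u$.

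The main obstacle is the extraction of finite-dimensional approximate retracts from enriched finite presentability. In view of Remark \ref{finpres2}, one cannot replace the domain or codomain by a finite-dimensional object outright, but the approximate retractions — which is all the argument needs — follow directly from the $\CMet$-level preservation of directed colimits. Everything after that is routine bookkeeping of $\eps$'s.
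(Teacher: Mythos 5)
Your proposal is correct and follows essentially the same route as the paper: both reduce the square to a $\frac{\eps}{4}$-commutative square with finite-dimensional corners via approximate retractions $\alpha'\alpha\sim_{\eps/4}\id_A$, $\beta'\beta\sim_{\eps/4}\id_B$ (the paper takes these from Remark \ref{finpres2}, you derive them directly from preservation of directed colimits applied to $\id_A$, which amounts to the same thing), then apply Lemma \ref{pure2} and sum the errors. The $\eps$-bookkeeping matches the paper's.
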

\begin{proof}
Sufficiency follows from \ref{finpres}. Assume that $f$ is pure and consider a  commutative square 
$$
\xymatrix@=3pc{
K \ar[r]^{f} & L \\
A\ar [u]^{u} \ar [r]_{g} & B \ar[u]_{v}
}
$$
with $A$ and $B$ finitely presentable in the enriched sense. Let $\eps>0$
and take $u_A:A\to A_0$, $r_A:A_0\to A$, $u_B:B\to B_0$ and $r_B:B_0\to B$ from \ref{finpres2} such that $r_Au_A\sim_{\frac{\eps}{4}}\id_A$ and $r_Bu_B\sim_{\frac{\eps}{4}}\id_B$. Then the square
$$
\xymatrix@=3pc{
K \ar[r]^{f} & L \\
A_0\ar [u]^{ur_A} \ar [r]_{u_Bgr_A} & B_0 \ar[u]_{vr_B}
}
$$
$\frac{\eps}{4}$-commutes because
$$
vr_Bu_Bgr_A\sim_{\frac{\eps}{4}}vgr_A=fuu_A. 
$$
Following \ref{pure2}, there exists $t:B_0\to K$ such that $tu_Bgr_A\sim_{\frac{\eps}{2}}ur_A$. Then
$$
tu_Bg\sim_{\frac{\eps}{4}}tu_Bgr_Au_A\sim_{\frac{\eps}{2}}ur_Au_A
\sim_{\frac{\eps}{2}}u
$$
Hence $tu_Bg\sim_\eps u$.
\end{proof}

\begin{rem}\label{epure}
{
\em
Ordinary purity can be expressed in the following way. Let $\cq(g,f)$ be the image of the projection $p_1:\ck^\to(g,f)\to \ck(A,K)$ sending $(u,v)$ to $u$. Hence $\cq(g,f)$ consists of those $u:A\to K$ for which there exists $v:B\to L$ such that $vg=fu$. Then $f$ is pure if and only if the mapping 
$$
q:\ck(B,K)\to\cq(g,f)
$$
sending $t:B\to K$ to $tg$ is surjective for every $g:A\to B$ with $A$ and $B$ finitely presentable.

The approach of \cite{LaR} suggests how to define enriched purity. We replace the factorization system (surjective, injective) in $\Set$ by
the factorization system (dense, isometry) in $\CMet$ (see 
\cite[3.16(2)]{AR}). Then $\cq(g,f)$ is given by the (dense, isometry) factorization of $p_1$ and we demand that $q$ above is dense. Using 
\ref{pure3}, this leads to the just defined pure morphisms.
}
\end{rem}

\begin{propo}\label{closure}
Pure morphisms are closed under directed colimits of in $\Ban^\to$.
\end{propo}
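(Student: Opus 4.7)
The plan is as follows. Let $(f_i\colon K_i\to L_i)_{i\in I}$ be a directed diagram of pure morphisms with colimit $f\colon K\to L$ in $\Ban^\to$. Since colimits in an arrow category are computed componentwise, we obtain colimit cocones $(k_i\colon K_i\to K)$ and $(\ell_i\colon L_i\to L)$ in $\Ban$ satisfying $\ell_if_i=fk_i$. The strategy is to take an arbitrary commutative test square for $f$ with finite-dimensional corners, approximate it by an approximately commutative square over a single stage of the diagram, apply purity of $f_j$ there, and then transport the resulting lift back to $K$.

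Fix a commutative square $vg=fu$ with $u\colon A\to K$, $v\colon B\to L$, $g\colon A\to B$ and $A,B$ finite-dimensional, and let $\eps>0$. Pick a tolerance $\eps_0>0$ to be fixed later. Using Theorem \ref{finpres}(1) for both $A$ and $B$ and taking a common upper bound, I would find an index $i\in I$ together with $u'\colon A\to K_i$ and $v'\colon B\to L_i$ such that $k_iu'\sim_{\eps_0}u$ and $\ell_iv'\sim_{\eps_0}v$. Combining $vg=fu$ with $\ell_if_i=fk_i$ then gives
$$
\ell_if_iu'=fk_iu'\sim_{\eps_0}fu=vg\sim_{\eps_0}\ell_iv'g,
$$
so $\pa\ell_i(f_iu'-v'g)\pa\leq 2\eps_0$.

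The key step is to promote this near-commutation to an approximately commutative square over a single stage, so that purity of some $f_j$ may be invoked. For this I would apply Theorem \ref{finpres}(2) to the pair $f_iu', v'g\colon A\to L_i$, obtaining $j\geq i$ with $\pa\ell_{ij}(f_iu'-v'g)\pa\leq 3\eps_0$. Setting $u''=k_{ij}u'\colon A\to K_j$ and $v''=\ell_{ij}v'\colon B\to L_j$ yields a $3\eps_0$-commutative square with top edge $f_j$. Purity of $f_j$ together with Lemma \ref{pure2} then produces $t\colon B\to K_j$ with $tg\sim_{6\eps_0}u''$, and composing with $k_j$ (which is nonexpansive) and using $k_jk_{ij}=k_i$ gives
$$
k_jtg\sim_{6\eps_0}k_ju''=k_iu'\sim_{\eps_0}u,
$$
hence $k_jtg\sim_{7\eps_0}u$. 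Choosing $\eps_0=\eps/7$ exhibits $k_jt\colon B\to K$ as the required witness, so $f$ is pure.

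The main obstacle is the stage-to-stage transition: purity of $f_i$ only lifts commutative squares sitting over $K_i\to L_i$, whereas the approximate square supplied by finite presentability initially lives over the colimit $L$. Converting it into a genuinely $\delta$-commutative square over some $L_j$ is precisely what condition (2) of Theorem \ref{finpres} — the uniform Dini-type phenomenon established in its proof — is designed to do. Once this is in place, Lemma \ref{pure2} lets purity absorb the small residual non-commutativity, and the rest is a routine bookkeeping of $\eps$'s.
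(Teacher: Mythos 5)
Your proof is correct and follows essentially the same route as the paper's: descend the test square to a stage via Theorem \ref{finpres}(1), use \ref{finpres}(2) to turn the near-commutation into an approximately commutative square at a later stage $j$, invoke Lemma \ref{pure2} for $f_j$, and push the lift forward along the colimit cocone. If anything, your $\eps$-bookkeeping is slightly more careful than the paper's, which writes $fk_iu'=fu$ where only an approximate equality holds.
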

\begin{proof}
Let $(k_i,l_i):(f_i\to f)_{i\in I}$ be a directed colimit 
of pure morphisms $f_i:K_i\to L_i$ in $\Ban^\to$. This means that $k_i:K_i\to K$ and $l_i:L_i\to L$ are directed colimits in $\Ban$ and $fk_i=l_if_i$ for every $i\in I$. Consider a commutative square 
$$
\xymatrix@=3pc{
K \ar[r]^{f} & L \\
A\ar [u]^{u} \ar [r]_{g} & B \ar[u]_{v}
}
$$
where $A$ and $B$ are finite-dimensional and take $\eps>0$. Following 
\ref{finpres}(1), there is $i\in I$ and $u':A\to K_i$ and $v':B\to L_i$ such that $k_iu'\sim_{\frac{\eps}{3}} u$ and $l_iv'\sim_{\frac{\eps}{3}} v$. Since
$$
l_if_iu'=fk_iu'=fu=vg=l_iv'g,
$$
\ref{finpres}(2) implies that there is $j\geq i$ such that
$f_ju'\sim_{\frac{\eps}{3}} v'g$. Following \ref{pure2}, there is 
$t:B\to K_j$ such that $tg\sim_{\frac{2\eps}{3}} u'$. Hence 
$$
k_jtg\sim_{\frac{2\eps}{3}} k_ju'\sim_{\frac{\eps}{3}} u.
$$
Thus $k_jtg\sim_\eps u$.
\end{proof}
\begin{rem}\label{split}
{
\em
(1) Split monomorphisms $f:K\to L$ in $\Ban$ coincide with projections 
$p:L\to L$ of norm one on $L$. Indeed, if $f$ is split by $s$ (i.e., $sf=\id_K$) then $p=fs$. Conversely, given a projection $p:L\to L$ of norm one then the embedding $p[L]\to L$ is split by $p$ (see \cite[2.2.9]{BS}). Equivalently, $L$ is a biproduct of $p[L]$ and $\ker(p)$.

(2) Every split monomorphism in $\Ban$ is pure. It suffices to take $t=vs$.
}
\end{rem}

\begin{propo}\label{char}
A morphism $f$ in $\Ban$ is pure if and only if for every $\eps>0$ there are $h_1,h_2$ such that $h_1f\sim_\eps h_2$ and $h_2$ is a directed colimit of split monomorphisms in $\Ban^\to$. 
\end{propo}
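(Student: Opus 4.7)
The plan is to establish the two directions separately. Sufficiency is a clean application of Proposition \ref{closure} (pure morphisms are closed under directed colimits in $\Ban^\to$) together with Lemma \ref{pure2} (the $\eps$-commutative square version of purity). Necessity is substantially harder and requires constructing, for each pure $f$ and each $\eps>0$, an amalgamation of the local approximate splittings that purity supplies.

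For sufficiency, I would fix a commutative square
\[
\xymatrix@=3pc{
K \ar[r]^{f} & L \\
A\ar[u]^{u} \ar[r]_{g} & B \ar[u]_{v}
}
\]
with $A, B$ finite-dimensional together with a target $\eps'>0$. Applying the hypothesis with $\eps=\eps'/2$ yields $h_1: L\to M$ and $h_2: K\to M$ with $h_1 f\sim_{\eps'/2} h_2$ and $h_2$ a directed colimit of split monomorphisms. By Remark \ref{split}(2) every split mono is pure, and by Proposition \ref{closure} pure morphisms are closed under directed colimits in $\Ban^\to$, so $h_2$ is itself pure. Combining $fu=vg$ with the approximation gives $h_2 u\sim_{\eps'/2} (h_1 v)g$, so the square $(u,h_1 v): g\to h_2$ is $(\eps'/2)$-commutative. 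Lemma \ref{pure2} applied to the pure morphism $h_2$ then supplies $t: B\to K$ with $tg\sim_{\eps'} u$, verifying purity of $f$.

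For necessity, given pure $f: K\to L$ and $\eps>0$, the task is to exhibit $M$ together with $h_1: L\to M$ and $h_2: K\to M$ such that $h_2$ is a directed colimit of split monos and $h_1 f\sim_\eps h_2$. My plan is to build such $M$ by amalgamating the approximate local splittings furnished by purity: for each finite-dimensional $A\subseteq K$ and $B\subseteq L$ with $f(A)\subseteq B$, purity yields $t_{A,B,\delta}: B\to K$ with $t_{A,B,\delta}\circ f|_A \sim_\delta \id_A$. One packages these into a directed diagram in $\Ban^\to$ whose objects are split monos between finite-dimensional spaces, passes to the colimit to obtain $h_2: K\to M$, and assembles the $t_{A,B,\delta}$ into a coherent $h_1: L\to M$ satisfying $h_1 f\sim_\eps h_2$.

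The hard part will be arranging the finite-dimensional stages of this diagram to be genuine split monos in $\Ban$ rather than merely approximate ones. Here I would exploit the explicit $\eps$-pushout norm formula from Remark \ref{e-push}: given an approximate splitting at a given finite-dimensional stage, one can form an $\eps$-pushout in which the relevant horizontal arrow becomes a literal split mono in $\Ban$, with the error absorbed into the pushout's norm perturbation. Combined with a principle-of-local-reflexivity type argument applied to the finite-dimensional slices of $M$, this should realize $h_2$ as a bona fide directed colimit of split monos in $\Ban^\to$. Navigating between the enriched $\eps$-tolerance and strict categorical splitness is the delicate point of the necessity direction.
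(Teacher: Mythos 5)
Your sufficiency argument is correct and coincides with the paper's: take $h_1,h_2$ for $\eps/2$, observe that $h_2$ is pure by Remark \ref{split}(2) and Proposition \ref{closure}, note the square $(u,h_1v):g\to h_2$ is $\frac{\eps}{2}$-commutative, and invoke Lemma \ref{pure2}. No issues there.

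The necessity direction, however, is only a sketch, and the specific plan you outline has genuine gaps. First, a structural one: you insist that the stages of your directed diagram be split monomorphisms \emph{between finite-dimensional spaces}, but purity hands you splittings $t:B\to K$ landing in all of $K$, and the statement does not require finite-dimensional stages anyway. The construction that actually works keeps $K$ as the fixed domain throughout: write $K=\colim(k_i:K_i\to K)$ and $L=\colim(l_j:L_j\to L)$ as directed colimits of finite-dimensional subspaces, use Theorem \ref{finpres} to find $f_i:K_i\to L_{j_i}$ with $l_{j_i}f_i\sim_{\frac{\eps}{2}}fk_i$, use Lemma \ref{pure2} to get $t_i:L_{j_i}\to K$ with $t_if_i\sim_\eps k_i$, and form the $\eps$-pushout of the span $K\xleftarrow{k_i}K_i\xrightarrow{f_i}L_{j_i}$. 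The universal property applied to the $\eps$-commutative square $(\id_K,t_i)$ then produces a retraction of $\bar f_i:K\to\bar L_i$, so $\bar f_i$ is a genuine split mono --- this is the ``$\eps$-pushout absorbs the error'' idea you correctly identify, but you never actually carry it out, and your appeal to a ``principle-of-local-reflexivity type argument'' is not a substitute for it.

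Second, and more seriously, your sketch never produces $h_1$ or verifies $h_1f\sim_\eps h_2$; you only say the $t_{A,B,\delta}$ should be ``assembled into a coherent $h_1$.'' In the paper's construction this comes for free: the $\bar f_i$ form a directed diagram $(\id_K,\bar k_{ii'}):\bar f_i\to\bar f_{i'}$ (the connecting maps $\bar k_{ii'}$ being induced by the universal property of the $\eps$-pushouts), and since $\eps$-pushouts commute with directed colimits, the colimit of this diagram is precisely the $\eps$-pushout of $f$ along $\id_K$, i.e.\ $h_2:K\to\bar L$ together with $h_1:L\to\bar L$ satisfying $h_1f\sim_\eps h_2$ \emph{by the definition of $\eps$-pushout}. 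With varying finite-dimensional domains as in your plan, both the compatibility of the connecting maps and the identification of the colimit become genuinely problematic, and the source of $h_1$ is unclear. So the key mechanism is present in your proposal in embryonic form, but the diagram you propose to build is the wrong one, and the argument as written does not close.
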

\begin{proof}
Assume that, for every $\eps>0$, there are $h_1,h_2$ such that 
$h_1f\sim_\eps h_2$ where $h_2$ is a directed colimit of split monomorphisms in $\Ban^\to$. Consider a commutative square
$$
\xymatrix@=3pc{
K \ar[r]^{f} & L \\
A\ar [u]^{u} \ar [r]_{g} & B \ar[u]_{v}
}
$$
where $A$ and $B$ are finite-dimensional and take $\eps>0$ and $h_1:L\to M,h_2:K\to M$ above for $\frac{\eps}{2}$. Then the square
$$
\xymatrix@=3pc{
K \ar[r]^{h_2} & M \\
A\ar [u]^{u} \ar [r]_{g} & B \ar[u]_{h_1v}
}
$$
$\frac{\eps}{2}$-commutes. Since split monomorphisms are pure, \ref{closure} implies that $h_2$ is pure. Following \ref{pure2}, there is $t:B\to K$ such that  $tg\sim_\eps u$. Thus $f$ is pure.

Conversely,  let $f:K\to L$ be pure and $\eps>0$. Express $K$ as a directed colimit $(k_i:K_i\to K)_{i\in I}$ of isometries $k_{ii'}:K_i\to K_{i'}$ between finite-dimensional Banach spaces. Similarly, let $(l_j:L_j\to L)_{j\in J}$ be a directed colimit of isometries between finite-dimensional Banach spaces. Following \ref{finpres}
(or \cite[7.6]{AR}), for every $i\in I$ there is $f_i:K_i\to L_{j_i}$ such that $l_{j_i}f_i\sim_{\frac{\eps}{2}}fk_i$.
Following \ref{pure2}, for every $i\in I$ there is $t_i:L_{j_i}\to K$ such that $t_if_i\sim_\eps k_i$. Like in the proof of
\cite[3.11]{RT}, in the $\eps$-pushouts
$$
\xymatrix@=3pc{
K \ar[r]^{\bar{f}_i} & \bar{L}_i \\
K_i\ar [u]^{k_i} \ar [r]_{f_i} & L_{j_i} \ar[u]_{\bar{k}_i}
}
$$ 
every $\bar{f}_i$ is a split monomorphism. Since
$$
\bar{f}_{i'}k_i=\bar{f}_{i'}k_{i'}k_{ii'}=\bar{k}_{i'}f_{i'}k_{ii'}=
\bar{k}_{i'}l_{j_ij_{i'}}f_i,
$$
there is a unique morphism $\bar{k}_{ii'}:\bar{L}_i\to\bar{L}_{i'}$
such that $\bar{k}_{ii'}\bar{f}_i=\bar{f}_{i'}$ and $\bar{k}_{ii'}\bar{k}_i=\bar{k}_{i'}l_{j_ij_{i'}}$.

We get a directed diagram
$(\id_K,\bar{k}_{ii'}):\bar{f}_i\to\bar{f}_{i'}$ of split monomorphisms
whose colimit $\bar{f}:K\to\bar{L}$ is given by the $\eps$-pushout
$$
\xymatrix@=3pc{
K \ar[r]^{h_2} & \bar{L} \\
K\ar [u]^{\id_K} \ar [r]_{f} & L \ar[u]_{h_1}
}
$$ 
\end{proof}

\begin{rem}\label{char1}
{
\em
In the proof above,  $\bar{k}_{ii'}$ are isometries (see \ref{iso}(2)). Hence $h_2$ can be taken as a directed colimit of split monomorphisms and isometries in $\Ban^\to$.
}
\end{rem}

\begin{lemma}\label{cancell}
Pure morphisms in $\Ban$ are left-cancellable, i.e., $f_2f_1$ pure implies that $f_1$ is pure.
\end{lemma}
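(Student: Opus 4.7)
The plan is entirely straightforward: the definition of purity (Definition~\ref{pure}) asks only that, given a commutative test square with finite-dimensional corners, a factorization $t:B\to K$ exists with $tg\sim_\eps u$; the conclusion $tg\sim_\eps u$ makes no reference to the pure morphism itself, only to $u$ and $g$. So the natural strategy is to lift any test square for $f_1$ to a test square for $f_2f_1$ by post-composing with $f_2$, and then transport the witness $t$ back.

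In detail, suppose $f_1:K\to L$ and $f_2:L\to M$ with $f_2f_1$ pure. Fix $\eps>0$ and a commutative square
$$
\xymatrix@=3pc{
K \ar[r]^{f_1} & L \\
A\ar [u]^{u} \ar [r]_{g} & B \ar[u]_{v}
}
$$
with $A,B$ finite-dimensional. Post-composing $v$ with $f_2$ gives a new commutative square
$$
\xymatrix@=3pc{
K \ar[r]^{f_2f_1} & M \\
A\ar [u]^{u} \ar [r]_{g} & B \ar[u]_{f_2v}
}
$$
whose vertical corners are again finite-dimensional. Applying purity of $f_2f_1$ to this square at $\eps$ produces $t:B\to K$ with $tg\sim_\eps u$, which is exactly the witness required to certify purity of $f_1$ at $\eps$ for the original square.

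There is essentially no obstacle here, since the definition of purity is stated in terms of the domain object $K$ of the pure morphism and the data $u,g$, with the pure morphism entering only to make the square commute; composing with $f_2$ preserves commutativity. No appeal to the results on $\eps$-pushouts, split-monomorphism presentations (Proposition~\ref{char}), or Lemma~\ref{pure2} is needed. The argument also shows more generally that, for any $\CMet$-enriched category in which purity has been defined by the same template, purity is left-cancellable.
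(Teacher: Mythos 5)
Your proof is correct, and it is genuinely more elementary than the paper's. The paper deduces the lemma as a one-line corollary of Proposition~\ref{char}: if $h_1(f_2f_1)\sim_\eps h_2$ with $h_2$ a directed colimit of split monomorphisms, then $(h_1f_2)f_1\sim_\eps h_2$ exhibits $f_1$ as pure by the same criterion. You instead argue directly from Definition~\ref{pure}, exploiting the fact that the required witness $t:B\to K$ and the condition $tg\sim_\eps u$ refer only to the domain $K$ and the data $u,g$, so a test square for $f_1$ becomes a test square for $f_2f_1$ after post-composing $v$ with $f_2$, and the witness transports back verbatim. Your route avoids the machinery of $\eps$-pushouts and split-monomorphism presentations entirely and, as you note, establishes left-cancellability for any purity notion defined by this template; the paper's route is shorter on the page only because Proposition~\ref{char} has already been paid for. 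Both arguments are complete and correct.
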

\begin{proof}
It follows from \ref{char}.
\end{proof}

\begin{lemma}
Pure morphisms in $\Ban$ are closed under composition.
\end{lemma}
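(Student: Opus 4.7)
The plan is to use the definition of purity directly, applied twice, together with Lemma \ref{pure2} which allows us to handle $\eps$-commutative squares as well as commutative ones. Let $f\colon K\to L$ and $g\colon L\to M$ be pure morphisms in $\Ban$, and consider a commutative square
$$
\xymatrix@=3pc{
K \ar[r]^{gf} & M \\
A \ar[u]^{u}\ar[r]_{h} & B \ar[u]_{v}
}
$$
with $A,B$ finite-dimensional, together with $\eps>0$.

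First I would observe that the outer square $gfu = vh$ can be read as a commutative square for the morphism $g\colon L\to M$ with upper leg $fu\colon A\to L$. Applying purity of $g$ with parameter $\eps/2$, there exists $s\colon B\to L$ such that $sh\sim_{\eps/2} fu$. This gives an $(\eps/2)$-commutative square
$$
\xymatrix@=3pc{
K \ar[r]^{f} & L \\
A \ar[u]^{u}\ar[r]_{h} & B \ar[u]_{s}
}
$$
Since $f$ is pure, Lemma \ref{pure2} applied to this $(\eps/2)$-commutative square produces $t\colon B\to K$ with $th\sim_{\eps} u$, which is exactly what is needed to verify purity of $gf$ at level $\eps$.

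There is no real obstacle here: the only subtle point is that after the first application we only know $sh\sim_{\eps/2}fu$ rather than equality, so the second step must go through Lemma \ref{pure2} rather than Definition \ref{pure}. Balancing the $\eps$'s so that the $2\eps$ factor coming from Lemma \ref{pure2} produces the right final bound is the reason for the choice of $\eps/2$ in the first step.
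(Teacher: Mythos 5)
Your proof is correct and follows essentially the same route as the paper: first apply purity of the second factor at tolerance $\eps/2$ to get an $(\eps/2)$-commutative square over the first factor, then invoke Lemma \ref{pure2} (with its doubling of the tolerance) to conclude. The bookkeeping with $\eps/2$ matches the paper's argument exactly.
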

\begin{proof}
Let $f_1:K\to L$ and $f_2:L\to M$ be pure. Consider a commutative diagram
$$
\xymatrix@=3pc{
K \ar[r]^{f_1} & L \ar[r]^{f_2}  & M\\
A\ar [u]^{u} \ar [r]_{g} & B\ar[ru]_{v}
}
$$ 
and $\eps>0$. Since $f_2$ is pure, there is $t:B\to L$ such that $tg\sim_{\frac{\eps}{2}} f_1u$. Following \ref{pure2}, there is $s:B\to K$ such that $sg\sim_{\eps} u$. Thus $f_2f_1$ is pure.
\end{proof}
 
\begin{lemma}\label{pure6}
Pure morphisms in $\Ban$ are isometries.
\end{lemma}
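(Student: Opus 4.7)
The strategy is to combine the structural characterization of pure morphisms from Proposition \ref{char} (refined by Remark \ref{char1}) with the analytic interpretation of $\eps$-isometries in Remark \ref{equivalent}(1), and let $\eps \to 0$.

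First, I would check that every split monomorphism $f \colon K \to L$ in $\Ban$ is already an isometry: if $s \colon L \to K$ satisfies $sf = \id_K$, then $\|x\| = \|sfx\| \leq \|fx\| \leq \|x\|$ because both $s$ and $f$ have norm at most one, so $\|fx\| = \|x\|$. Combined with Remark \ref{char1}, this tells us that the morphism $h_2$ in Proposition \ref{char} can be chosen to be a directed colimit (in $\Ban^\to$) of split monomorphisms whose connecting maps are themselves isometries. In the concrete description of directed colimits in $\Ban$, the colimit injections out of the vertices are isometric when the transition maps are isometric, and it follows that $h_2$ itself is an isometry.

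Now fix a pure morphism $f \colon K \to L$ and an arbitrary $\eps > 0$. Apply Proposition \ref{char} to obtain $h_1, h_2$ with $h_1 f \sim_\eps h_2$ and $h_2$ an isometry by the paragraph above. By Lemma \ref{e-iso1}, $f$ is an $\eps$-isometry in the categorical sense of \ref{iso}(3). Since $f$ is a morphism of $\Ban$ it has norm at most one, so Remark \ref{equivalent}(1) translates this into the analytic statement
\[
\bigl|\,\|fx\| - \|x\|\,\bigr| \leq \eps \qquad \text{for every } x \in K \text{ with } \|x\| \leq 1.
\]
Because $\eps > 0$ was arbitrary, $\|fx\| = \|x\|$ on the unit ball, and by linearity on all of $K$. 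Hence $f$ is an isometry in the usual sense, which coincides with the categorical notion by Remark \ref{iso}(1).

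The main point that needs care is the claim that $h_2$ is genuinely an isometry. It relies on two ingredients: that split monomorphisms in $\Ban$ are isometries (an easy triangle inequality, as above), and that directed colimits of isometries in $\Ban^\to$ with isometric connecting morphisms yield an isometric colimit cocone. Once this is in hand, everything else is a clean application of the equivalence between categorical and analytic $\eps$-isometries provided by Remark \ref{equivalent}(1), together with Lemma \ref{e-iso1}.
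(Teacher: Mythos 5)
Your proposal is correct and follows essentially the same route as the paper: both obtain $h_1 f\sim_\eps h_2$ from Proposition \ref{char}, observe that $h_2$ is an isometry because split monomorphisms are isometries and directed colimits preserve this, apply Lemma \ref{e-iso1} to conclude $f$ is an $\eps$-isometry for every $\eps>0$, and finish with Remark \ref{equivalent}(1). The only difference is cosmetic: you justify the isometry of $h_2$ via the isometric connecting maps of Remark \ref{char1}, whereas the paper cites closure of isometries under directed colimits directly.
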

\begin{proof}
Since isometries are closed under directed colimits (\cite[2.5(4)]{AR}) and split mo\-no\-mor\-phisms are isometries, $h_2$ in \ref{char} are isometries. Following \ref{char} and \ref{e-iso1}, a pure morphism $f$ is an $\eps$-isometry for every $\eps>0$. Hence $f$ is an isometry (see
\ref{equivalent}(1)).
\end{proof}

Recall that a closed subspace $K$ of a Banach space $L$ is called 
an \textit{ideal} if the anihilator of $K$ in the dual space $L^\ast$ is the kernel of norm one projection in $L^\ast$. Ideals $K$ in $L$ are characterized as closed subspaces $K$ such that for every finite-dimensional subspace $B$ of $L$ and every $\eps>0$, there exists a linear map $t:B\to K$ such that $tx=x$ for $x\in K\cap B$ and $\pa t\pa\leq 1+\eps$ (see \cite[Theorem 1]{Li}).

\begin{propo}\label{ideal}
A closed subspace $K$ of a Banach space $L$ is an ideal if and only if
the embedding $K\to L$ is pure.
\end{propo}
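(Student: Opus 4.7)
The plan is to prove both directions directly from Definition \ref{pure} and the characterization of ideals quoted just above. The mediation is a small projection/scaling trick converting between $\Ban$-morphisms of norm $\leq 1$ with \emph{approximate} fixing and linear maps of norm $\leq 1+\eps$ with \emph{exact} fixing.

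For the implication ``pure $\Rightarrow$ ideal'', I take a finite-dimensional $B\subseteq L$ and $\eps>0$, set $A = K\cap B$, and form the commutative square whose four maps are all inclusions. Since $A$ is a subspace of the finite-dimensional space $B$, a linear projection $P:B\to A$ exists (the case $A=0$ is trivial); write $M = \pa P\pa$. Purity of $K\to L$ applied with tolerance $\eps/M$ yields $t:B\to K$ in $\Ban$ with $\pa t(a)-a\pa \leq (\eps/M)\pa a\pa$ for $a\in A$. To upgrade approximate fixing to exact fixing, I define
\[
t'(b) = t(b) + P(b) - t(P(b)),
\]
which takes values in $K$. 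A direct check gives $t'|_A = \id_A$ and $\pa t'(b)-t(b)\pa \leq \eps\pa b\pa$, so $\pa t'\pa \leq 1+\eps$, which is the stated ideal condition.

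For the implication ``ideal $\Rightarrow$ pure'', I start with a commutative square with $A,B$ finite-dimensional and $\eps>0$, and factor $v:B\to L$ as a surjection $v_1:B\to B'$ onto $B' = v(B)\subseteq L$ followed by the isometric inclusion $B'\hookrightarrow L$; here $B'$ is automatically finite-dimensional. Commutativity forces $v_1(g(a)) = u(a)\in K\cap B'$ for every $a\in A$. The ideal property applied to $B'$ with a small parameter $\eps'>0$ furnishes a linear $s:B'\to K$ of norm $\leq 1+\eps'$ with $s(x)=x$ on $K\cap B'$. Setting $t = \frac{1}{1+\eps'}\,s v_1:B\to K$ produces a morphism of $\Ban$, and $tg(a) = \frac{1}{1+\eps'}u(a)$ gives $\pa tg-u\pa \leq \eps'/(1+\eps')\leq \eps$ once $\eps'$ is chosen small.

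The main obstacle is nothing conceptual but rather this normalization mismatch between the two characterizations: purity produces morphisms of norm exactly $\leq 1$, whereas the ideal condition naturally produces maps of norm $\leq 1+\eps$ with exact fixing on $K\cap B$. The projection $P$ (in one direction) and the rescaling by $1/(1+\eps')$ (in the other) are the canonical bridges, at the cost of absorbing a controlled error into the $\eps$-parameter. All other ingredients, in particular the finite-dimensionality of $B'=v(B)$ and of $K\cap B$, are automatic from the finite-dimensionality of $A$ and $B$.
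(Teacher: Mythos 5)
Your proof is correct and follows essentially the same route as the paper's: both directions hinge on the same two bridges, namely rescaling by $\frac{1}{1+\eps'}$ to convert the norm-$(1+\eps')$ map from the ideal condition into a $\Ban$-morphism, and correcting an approximate retraction into one that exactly fixes $K\cap B$ at a controlled cost in norm. Your correction $t'=t+P-tP$ via a projection $P$ of norm $M$ is just a coordinate-free version of the paper's basis-by-basis modification (with the equivalence-of-norms constant $r$ playing the role of $M$), and your factorization of $v$ through its image $v(B)$ replaces the paper's (dense, isometry) factorization followed by a pullback; these differences are cosmetic.
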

\begin{proof}
I. Let $f:K\to L$ be an embedding of an ideal. Consider a commutative square
$$
\xymatrix@=3pc{
K \ar[r]^{f} & L \\
A\ar [u]^{u} \ar [r]_{g} & B \ar[u]_{v}
}
$$
with $A$ and $B$ finite-dimensional and $\eps>0$. Let $v=v_2v_1$ be a (dense, isometry) factorization of $v$. Let 
$$
\xymatrix@=3pc{
K \ar[r]^{f} & L \\
C\ar [u]^{\overline{v}} \ar [r]_{\overline{f}} & B_1 \ar[u]_{v_2}
}
$$
be a pullback. There is $w:A\to C$ such that $\overline{v}w=u$ and
$\overline{f}w=v_1g$. Take $\delta=\frac{\eps}{1-\eps}$. There is a linear operator $t:B_1\to K$ such that $t\overline{f}=\overline{v}$ and 
$\pa t\pa\leq 1+\delta$. Then $t'=\frac{1}{1+\delta}t$ has norm $\leq 1$. For $x\in C$, $\pa x\pa\leq 1$, we have
$$
\pa t'\overline{f}x-\overline{v}x\pa=\pa \frac{t\overline{f}x}{1+\delta}-\overline{v}x\pa=
\pa\frac{\overline{v}x}{1+\delta}-\overline{v}x\pa=
\frac{\delta}{1+\delta}\pa \overline{v}x\pa\leq\frac{\delta}{1+\delta}=\eps.
$$
Hence $t'\overline{f}\sim_\eps\overline{v}$. Therefore
$t'v_1g=t'\overline{f}w\sim_\eps\overline{v}w=u$. We have
proved that $f$ is pure.
 
II. Let $f:K\to L$ be pure. Consider a finite-dimensional subspace $B$ of $L$ and $\eps>0$. Let $v:B\to L$
be the embedding and take the pullback
$$
\xymatrix@=3pc{
K \ar[r]^{f} & L \\
A\ar [u]^{\overline{v}} \ar [r]_{\overline{f}} & B \ar[u]_{v}
}
$$
Then $\overline{f}:K\cap B=A\to B$ is the embedding of a finite-dimensional subspace of $B$. Let $e_1,\dots,e_m,\dots,e_n$
be a basis of $B$ such that $e_1\dots,e_m$ is a basis of $A$.
Since any two norms on a finite-dimensional Banach space are equivalent, there is a number $r\geq 1$ such that 
$$
\sum_{0<i\leq n} |a_i|\leq r\parallel\sum_{0<i\leq n} a_ie_i\parallel.
$$
Let $\delta = \frac{\varepsilon}{r}$. Since $f$ is pure,
there exists $t':B\to K$ such that $t'\overline{f}\sim_\delta\overline{v}$. Let $t:B\to K$ be defined as follows: $t(e_i)=e_i$ for $i=1\dots,m$ and $t(e_i)=t'(e_i)$ for
$i=m+1,\dots n$. Consider $a\in B$, $\pa a\pa\leq 1$. Then
\begin{align*}
\pa t(a)\pa &=\pa\sum_{0<i\leq m}a_ie_i +\sum_{m<i\leq n}a_it'(e_i)\pa
=\pa\sum_{0<i\leq m}a_i(e_i-t'(e_i))+\sum_{0<i\leq n}a_it'(e_i)\pa\\
&\leq \pa \sum_{0<i\leq m} a_i(e_i-t'(e_i))\pa + \pa t'(a)\pa\leq
\pa \sum_{0<i\leq m} a_i(e_i-t'(e_i))\pa + \pa a\pa\\
&\leq \sum_{0<i\leq m}|a_i|\delta + \pa a\pa\leq \sum_{0<i\leq n}|a_i|\delta + \pa a\pa\leq
r\delta\pa\sum_{0<i\leq n}a_ie_i\pa+\pa a\pa\\
&=\eps\pa a\pa +\pa a\pa=(1+\eps)\pa a\pa .
\end{align*}
Hence $\pa t\pa\leq 1+\eps$. Since $t(x)=x$ for $x\in A$, we have proved that $K$ is an ideal in $L$.
\end{proof}

\begin{coro}\label{ideal2}
A morphism in $\Ban$ is pure if and only it it is an embedding of an ideal.
\end{coro}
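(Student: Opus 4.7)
The plan is to bootstrap from Proposition~\ref{ideal} using Lemma~\ref{pure6}: since a pure morphism is automatically an isometry, it factors canonically as an isometric isomorphism onto its image followed by the inclusion of that image into $L$, so purity of the whole map will be equivalent to purity of the inclusion, which is the situation already handled.

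For the ``if'' direction, I would start with $f\colon K\to L$ an embedding of an ideal, meaning $f$ is an isometry with $f(K)$ an ideal in $L$, and factor it as $K\xrightarrow{\bar f} f(K)\hookrightarrow L$. The corestriction $\bar f$ is an isometric isomorphism, hence a split monomorphism, hence pure by Remark~\ref{split}(2); the inclusion is pure by Proposition~\ref{ideal}; and closure of pure morphisms under composition then yields purity of $f$.

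For the converse, starting from a pure $f\colon K\to L$, I would invoke Lemma~\ref{pure6} to conclude that $f$ is an isometry, so $f(K)$ is a closed subspace of $L$ and the corestriction $\bar f\colon K\to f(K)$ is an isometric isomorphism in $\Ban$. The inclusion $\iota\colon f(K)\hookrightarrow L$ then equals $f\bar f^{-1}$, a composition of pure morphisms and hence pure by closure under composition. Proposition~\ref{ideal} then identifies $f(K)$ as an ideal in $L$, so $f$ is an embedding of an ideal as required.

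Since the substantive work already sits in Proposition~\ref{ideal} and Lemma~\ref{pure6}, I do not anticipate a serious obstacle here; the only thing to verify carefully is that the image-factorization really does transfer purity in both directions, which follows for free from the closure of pure morphisms under composition together with the purity of split monomorphisms (and hence isomorphisms) established in Remark~\ref{split}(2).
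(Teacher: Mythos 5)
Your argument is correct and matches the paper's (very terse) proof: both reduce to Proposition~\ref{ideal} by using Lemma~\ref{pure6} to see that a pure morphism is an isometry and then transferring purity across the image factorization. The extra details you supply (purity of the isometric isomorphism via Remark~\ref{split}(2) and closure under composition) are exactly what the paper leaves implicit.
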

\begin{proof}
It follows from \ref{pure6} and the fact that an isometry $f:K\to L$ makes $f[K]$ a closed subspace of $L$.
\end{proof}

\begin{propo}\label{stable}
Pure morphisms in $\Ban$ are stable under $\eps$-pushouts for every $\eps>0$.
\end{propo}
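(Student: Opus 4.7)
The plan is to combine Proposition~\ref{char} with the universal property of the $\eps$-pushout. Given a pure $f:K\to L$ and the $\eps$-pushout
$$
\xymatrix@=3pc{
K\ar[r]^{f}\ar[d]_{g} & L\ar[d]^{\bar g}\\
M\ar[r]_{\bar f} & N,
}
$$
the goal is to verify the criterion of Proposition~\ref{char} for $\bar f$. First, I would fix $\delta\le\eps$. Since $f$ is pure, Proposition~\ref{char} provides $p_1:L\to P$ and $p_2:K\to P$ with $p_1 f\sim_\delta p_2$ and $p_2$ a directed colimit of split monomorphisms in $\Ban^{\to}$. Inspecting the proof of~\ref{char}, this colimit can be taken along a directed diagram of split monomorphisms $\phi_i:K\to Y_i$ whose domain transition maps are all $\id_K$, so that $P=\colim Y_i$ with $p_2$ induced by the $\phi_i$.

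Next I would form the ordinary pushout
$$
\xymatrix@=3pc{
K\ar[r]^{p_2}\ar[d]_{g} & P\ar[d]^{q_1}\\
M\ar[r]_{q_2} & P'
}
$$
in $\Ban$ and show that $q_2$ is itself a directed colimit of split monomorphisms in $\Ban^{\to}$. The key sub-lemma is that, in $\Ban$, a pushout of a split monomorphism along any morphism is again a split monomorphism: if $\phi:K\to Y$ has retraction $r$, then the compatible pair $(\id_M,gr)$ has norms $\le 1$ and factors through the pushout to give a retraction of the pushed-out arrow. Applying this objectwise to the diagram of $\phi_i$'s, and using that pushouts commute with directed colimits, realizes $q_2$ as the directed colimit of split monomorphisms $M\to Y_i\oplus_K M$.

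Now, since $q_1 p_1 f\sim_\delta q_1 p_2=q_2 g$, the square
$$
\xymatrix@=3pc{
K\ar[r]^{f}\ar[d]_{g} & L\ar[d]^{q_1 p_1}\\
M\ar[r]_{q_2} & P'
}
$$
is $\delta$-commutative, hence $\eps$-commutative. The universal property of the $\eps$-pushout yields a unique $t:N\to P'$ with $t\bar f=q_2$ and $t\bar g=q_1 p_1$. The pair $(t,q_2)$ then witnesses the criterion of Proposition~\ref{char} for $\bar f$: the exact equality $t\bar f=q_2$ gives $t\bar f\sim_{\delta'} q_2$ for every $\delta'>0$, while $q_2$ is a directed colimit of split monomorphisms, so $\bar f$ is pure.

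The step I expect to require most care is the stability of ``directed colimit of split monomorphisms in $\Ban^{\to}$'' under pushout. This stability relies on retaining the defining diagram in the form produced by the proof of~\ref{char}, namely with constant domain $K$ and identity domain transitions, so that pushing out along $g:K\to M$ transforms it cleanly into a diagram with constant domain $M$ whose codomain legs are split monomorphisms and whose colimit is precisely $q_2$.
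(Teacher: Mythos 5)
Your proof is correct and follows essentially the same route as the paper: factor the pure morphism through a directed colimit of split monomorphisms via Proposition~\ref{char}, push that colimit out along the other leg of the square (using that split monomorphisms are stable under pushouts and that pushouts commute with directed colimits), and then use the universal property of the $\eps$-pushout to obtain the comparison morphism $t$ with $t\bar f$ equal to the pushed-out colimit, so that \ref{char} applies to $\bar f$. The only difference is that you make explicit the retraction argument and the constant-domain form of the colimit from the proof of~\ref{char}, details the paper leaves implicit.
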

\begin{proof}
Consider an $\eps$-pushouts
$$
\xymatrix@=3pc{
\bar{K} \ar[r]^{\bar{f}} & \bar{L} \\
K\ar [u]^{u} \ar [r]_{f} & L \ar[u]_{\bar{u}}
}
$$ 
where $f$ is pure. Following \ref{char}, there are $g,h$
such that $gf\sim_\eps h$, with $h$ being a directed colimit of split monomorphisms. Take the pushout
$$
\xymatrix@=3pc{
\bar{K} \ar[r]^{\bar{h}} & \bar{M} \\
K\ar [u]^{u} \ar [r]_{h} & M \ar[u]_{\bar{\bar{u}}}
}
$$ 
Since
$\bar{\bar{u}}gf\sim_\eps\bar{\bar{u}}h=\bar{h}u$, there is a unique
$\bar{g}:\bar{L}\to\bar{M}$ such that $\bar{g}\bar{f}=\bar{h}$ and $\bar{g}\bar{u}=\bar{\bar{u}}g$. Since split monomorphisms are stable under pushouts and pushouts commute with directed colimits, $\bar{h}$
is a directed colimit of split monomorphisms. Then \ref{char} implies that $\bar{f}$ is pure.
\end{proof}

\begin{propo}\label{preserve}
Let $F:\Ban\to\Ban$ be an enriched functor preserving directed colimits.
Then $F$ preserves pure morphisms.
\end{propo}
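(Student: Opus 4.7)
The strategy is to exploit the characterization of pure morphisms given in Proposition \ref{char} and the fact that any functor preserves split monomorphisms.

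Let $f:K\to L$ be a pure morphism in $\Ban$ and fix $\eps>0$. By \ref{char}, there exist morphisms $h_1:L\to M$ and $h_2:K\to M$ such that $h_1f\sim_\eps h_2$ and $h_2$ is a directed colimit in $\Ban^\to$ of split monomorphisms $s_i:A_i\to B_i$ (with connecting maps $(u_i,v_i):s_i\to s_j$). I shall verify that $Ff$ satisfies the same characterization.

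Because $F$ is $\CMet$-enriched, the action $\Ban(K,M)\to\Ban(FK,FM)$ is nonexpansive, so $d(Fh_1\cdot Ff,Fh_2)\leq d(h_1f,h_2)\leq\eps$; that is, $Fh_1\cdot Ff\sim_\eps Fh_2$. Next, any functor whatsoever preserves split monomorphisms, since $r s_i=\id$ entails $Fr\cdot Fs_i=\id$. Hence each $Fs_i$ is a split monomorphism in $\Ban$. Since colimits in the arrow category $\Ban^\to$ are computed pointwise in $\Ban$, and $F$ preserves directed colimits in $\Ban$, the induced functor $F^\to:\Ban^\to\to\Ban^\to$ preserves the directed colimit expressing $h_2$; consequently $Fh_2$ is a directed colimit in $\Ban^\to$ of the split monomorphisms $Fs_i$.

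Thus for every $\eps>0$ we have produced $Fh_1,Fh_2$ with $Fh_1\cdot Ff\sim_\eps Fh_2$ and $Fh_2$ a directed colimit of split monomorphisms in $\Ban^\to$. Invoking the converse direction of \ref{char} we conclude that $Ff$ is pure.
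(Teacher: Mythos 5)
Your proof is correct and follows essentially the same route as the paper: apply $F$ to the data provided by Proposition \ref{char}, using that enrichment preserves $\sim_\eps$, that any functor preserves split monomorphisms, and that preservation of directed colimits passes to the arrow category, then invoke \ref{char} again. You have simply made explicit the details that the paper's three-line proof leaves to the reader.
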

\begin{proof}
$F$ clearly preserves directed colimits of split monomorphisms. Since $F$ is enriched $f\sim_\eps g$ implies $Ff\sim_\eps Fg$. Thus the result follows from \ref{char}.
\end{proof}

\begin{rem}\label{preserve1}
{
\em
The projective tensor product functor $K\otimes-:\Ban\to\Ban$ is enriched and preserves all colimits. Hence, by \ref{preserve}, it preserves pure morphisms. Thus it preserves ideals, which was proved in
\cite[Theorem 1]{Ra}. Add that it does not need to preserve isometries (see \cite{Ry}, p.18).
}
\end{rem}

\begin{rem}\label{dual}
{
\em
Following \cite[Lemma 1(i)]{Ra}, the canonical embedding $d_X:X\to X^{\ast\ast}$ is an ideal. Recall that $X^\ast$ is the space $\{X,\Bbb C\}$ of bounded linear mappings $X\to\Bbb C$.
}
\end{rem}

Recall that a Banach space $K$ is \textit{injective} with respect to $h:A\to B$ in $\Ban$ if for every $f:A\to K$ there exists $g:B\to K$ such that $gh=f$.

\begin{defi}\label{pure-inj}
{
\em
Banach spaces injective with respect to pure morphisms will be called \textit{pure-injective}.
}
\end{defi}

\begin{rem}\label{inj}
{
\em
Injective Banach spaces, i.e., injective with respect to isometries, are pure-injective.
}
\end{rem}

\begin{lemma}\label{pure-inj1}
Banach spaces $K^\ast$ are pure-injective.
\end{lemma}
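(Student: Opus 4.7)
The plan is to exploit the tensor-hom adjunction in $\Ban$ together with the Hahn--Banach theorem. Since $K^\ast=\{K,\Bbb{C}\}$ is the internal hom (cf.\ Remark \ref{dual}), there is a natural isometric bijection $\Ban(A,K^\ast)\cong\Ban(K\otimes A,\Bbb{C})$ sending a morphism $f$ to its transpose $\hat f$. Given a pure morphism $h:A\to B$ and an arbitrary $f:A\to K^\ast$, the idea is to transpose $f$ to a functional $\hat f:K\otimes A\to\Bbb{C}$, extend it to $\hat g:K\otimes B\to\Bbb{C}$ in the functional-analytic sense, and transpose back to obtain the required $g:B\to K^\ast$ with $gh=f$.

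The key input is that purity is preserved by $K\otimes-$: by Remark \ref{preserve1}, $K\otimes h:K\otimes A\to K\otimes B$ is pure, and Lemma \ref{pure6} then tells us that it is an isometry. Consequently $\hat f$ induces a well-defined linear functional of norm $\leq 1$ on the closed subspace $(K\otimes h)(K\otimes A)\subseteq K\otimes B$, and Hahn--Banach extends it to $\hat g:K\otimes B\to\Bbb{C}$ with $\pa\hat g\pa\leq 1$ and $\hat g\circ (K\otimes h)=\hat f$. Transposing $\hat g$ back along the adjunction produces $g:B\to K^\ast$ in $\Ban$ satisfying $gh=f$, which is exactly the injectivity condition of Definition \ref{pure-inj}.

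The proof is essentially a clean chain of three ingredients: Remark \ref{preserve1} (preservation of purity by the projective tensor product), Lemma \ref{pure6} (purity forces an isometry), and the classical Hahn--Banach extension theorem. I do not expect a serious obstacle; the only point to verify with care is that the tensor-hom adjunction truly transports a norm-$\leq 1$ functional on $K\otimes B$ into a morphism of $\Ban$ into $K^\ast$, which is immediate from the fact that $\Ban(A,K^\ast)\cong\Ban(K\otimes A,\Bbb{C})$ is an isomorphism of metric hom-spaces.
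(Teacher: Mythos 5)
Your proposal is correct and follows essentially the same route as the paper's own proof: transpose $f$ along the adjunction $K\otimes-\dashv\{K,-\}$, use Remark \ref{preserve1} to see that $K\otimes h$ is pure, extend the resulting functional over $K\otimes h$, and transpose back. The only cosmetic difference is that the paper compresses the extension step into the statement that $\Bbb C$ is injective, whereas you unpack it explicitly via Lemma \ref{pure6} (pure morphisms are isometries) plus Hahn--Banach, which is exactly what the paper's appeal to injectivity amounts to.
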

\begin{proof}
Let $h:A\to B$ be pure and $f:A\to K^\ast$. Since $K\otimes-$ is left adjoint to $\{K,-\}$, we get the adjoint transpose $\tilde{f}:K\otimes A\to\Bbb C$ and the pure morphism $K\otimes h:K\otimes A\to K\otimes B$ (see \ref{preserve1}). Since $\Bbb C$ is injective, there is $g:K\otimes B\to\Bbb C$
such that $g(K\otimes h)=\tilde{f}$. Then the adjoint transpose $\tilde{g}:B\to K^\ast$ satisfies $\tilde{g}h=f$. Hence $K^\ast$ is 
pure-injective.
\end{proof}

\begin{rem}\label{wfs}
{
\em
$\Ban$ has enough pure-injectives because every Banach space admits a pure morphism $K\to K^{\ast\ast}$ and $K^{\ast\ast}$ is pure-injective. 

Let $\ci$ denote the class of all ideals and $\ci^\square$ consist
of morphisms $g:C\to B$ such that in every commutative square 
$$
\xymatrix@=3pc{
B \ar[r]^{u} & D \\
A\ar [u]^{f} \ar [r]_{v} & C \ar[u]_{g}
}
$$ 
there exists $t:B\to D$ such that $tf=v$ and $gt=u$.
Following \ref{cancell} and \cite[1.6]{AHRT}, $(\ci,\ci^\square)$ is a weak factorization system. 
 
The category of Banach spaces with ideals as morphisms is $\aleph_1$-accessible. This follows from \ref{closure} and the fact for every separable subspace $A$ of $K$ there is a separable ideal $B$ of $K$ containing $A$ (see \cite{SY}). Analogously to \cite[9.10]{LRV}, this category has the order property and thus it does not have a stable independence notion. Following \cite[3.1]{LRV1}, $\ci$ is not cofibrantly generated. Add that also isometries are not cofibrantly generated (see \cite[3.1(2)]{R}).
}
\end{rem}

\section{Model theory}
Pure morphisms in locally finitely presentable categories have a model-theoretic characterization as morphisms elementary with respect to 
positive-primitive formulas (see \cite[5.15]{AR1}). We will prove an analogical result for Banach spaces. The needed logic is closely related to the logic of positive bounded formulas (see \cite{H} and \cite{I}). 

Let $L$ be the language consisting of a binary operation symbol $+$, a nullary ope\-ration symbol $0$ and unary operation symbols $r\cdot-$
where $r$ is a rational complex number. Moreover, for every rational number
$M$, we have a unary relation symbol $\pa-\pa\leq M$. Terms
are given in a usual way using operation symbols. Atomic formulas are either $t_1=t_2$ where $t_1$ and $t_2$ are terms or $\pa t\pa\leq M$.
The notation $\varphi(x_1,\dots,x_n)$ for an atomic formula means that all variables in $\varphi$ are among $x_1,\dots,x_n$.

\textit{Positive-primitive} formulas are
$$
\varphi(x_1,\dots x_n)=(\exists y_1,\dots y_m)(\bigwedge\limits_{i<\omega}\psi_i(x_1,\dots,x_n,y_1,\dots,y_m))
$$
where $\psi_i$ are atomic formulas. \textit{Approximations} of $\varphi$
are formulas $\varphi_\eps$, $\eps>0$ where every occurence of an atomic formula $\pa t\pa\leq M$ is replaced by $\pa t\pa\leq M+\eps$. 
In particular,
$$
\varphi_\eps(x_1,\dots x_n)=(\exists y_1,\dots y_m)(\bigwedge\limits_{i<\omega}(\psi_i)_\eps(x_1,\dots,x_n,y_1,\dots,y_m)).
$$

For a Banach space $K$ and $a_1,\dots,a_n\in K$, the meaning of
$$
K\models\varphi[a_1,\dots,a_n]
$$
is evident.
We will write 
$$
K\models_{\ap}\varphi[a_1,\dots,a_n]
$$
if 
$$
K\models\varphi_\eps[a_1,\dots,a_n]
$$
for every $\eps>0$. Clearly,
$$
K\models\varphi[a_1,\dots,a_n]\Rightarrow K\models_{\ap}\varphi[a_1,\dots,a_n]
$$
but not conversely.
 
\begin{rem}\label{findim2}
{
\em
Let $A$ be a finite-dimensional Banach space with a basis $e_1,\dots,e_n$.
Consider the the conjunction of atomic formulas
$$
\pi_A(x_1,\dots,x_n)=\bigwedge\pa \sum\limits_{i=1}^n r_ix_i\pa \leq\pa \sum\limits_{i=1}^n r_ie_i\pa
$$
where the conjunction is indexed by all $n$-tuples $(r_1,\dots,r_n)$ of rational complex numbers. Clearly,
$$
K\models\pi_A[a_1,\dots,a_n]
$$
for $a_1\dots,a_n\in K$ if and only if the linear map $f:A\to K$ such that $f(e_i)=a_i$ for $i=1,\dots,n$ has norm $\leq 1$. In analogy to
\cite[5.5]{AR1} we call $\pi_A$ the \textit{presentation formula} of $A$. Similarly,
$$
K\models(\pi_A)_\eps[a_1,\dots,a_n]
$$ 
if the linear map $f:A\to K$ such that $f(e_i)=a_i$ for $i=1,\dots,n$ has norm $\leq 1+\eps$. 
}
\end{rem}

\begin{propo}\label{ideal3} 
A closed subspace $K$ of a Banach space $L$ is an ideal if and only if for each positive-primitive formula $\varphi(x_1,\dots,x_n)$ and each assignment of values $a_1,\dots,a_n\in K$ we have
$$
K\models_{\ap}\varphi[a_1,\dots,a_n] \Leftrightarrow 
L\models_{\ap}\varphi[a_1,\dots,a_n].
$$
\end{propo}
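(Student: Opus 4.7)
The plan is to prove both directions using the characterization of ideals recalled just before Proposition \ref{ideal}: $K$ is an ideal in $L$ iff for every finite-dimensional subspace $B\subseteq L$ and every $\delta>0$ there is a linear map $t\colon B\to K$ with $t|_{K\cap B}=\id$ and $\pa t\pa\leq 1+\delta$.

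For the implication \emph{$K$ ideal $\Rightarrow$ the model-theoretic equivalence}: the direction $K\models_{\ap}\varphi[a]\Rightarrow L\models_{\ap}\varphi[a]$ is immediate, since the inclusion $K\hookrightarrow L$ is an isometry and so any witness $b\in K^m$ for $\varphi_\eps$ in $K$ becomes a witness in $L$ with identical atomic-formula norms. For the reverse implication, fix $\eps>0$, choose a small $\eps'>0$ to be specified, and from $L\models_{\ap}\varphi[a]$ take $b\in L^m$ with $L\models\varphi_{\eps'}[a,b]$. Let $B=\operatorname{span}(a_1,\dots,a_n,b_1,\dots,b_m)\subseteq L$ and apply the ideal characterization to produce $t\colon B\to K$ with $t|_{K\cap B}=\id$ and $\pa t\pa\leq 1+\delta$; set $b'_j:=t(b_j)\in K$. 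Since $t$ is linear and $t(a_i)=a_i$, for each atomic $\psi_i=(\pa s_i\pa\leq M_i)$ one has $s_i(a,b')=t(s_i(a,b))$, hence $\pa s_i(a,b')\pa_K\leq(1+\delta)(M_i+\eps')$; equation-type atomic formulas $t_1=t_2$ are preserved automatically by linearity of $t$. Choosing $\delta$ and $\eps'$ sufficiently small then yields $\pa s_i(a,b')\pa_K\leq M_i+\eps$ for every $i$.

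For the converse (\emph{model equivalence $\Rightarrow$ $K$ ideal}): given a finite-dimensional $B\subseteq L$ and $\eps>0$, extend a basis $e_1,\dots,e_p$ of $K\cap B$ to a basis $e_1,\dots,e_q$ of $B$, set $a_i:=e_i$ for $i\leq p$, and consider the positive-primitive formula
$$
\varphi(x_1,\dots,x_p)=\exists y_{p+1},\dots,y_q\bigwedge_{r\in S}\pa\textstyle\sum_{i\leq p}r_ix_i+\sum_{j>p}r_jy_j\pa\leq N_r,
$$
where $S\subseteq\mathbb{Q}^q$ is countable with $\pa\sum r_ie_i\pa_L\leq 1$ for all $r\in S$ and with $\{\sum r_ie_i:r\in S\}$ dense in the closed unit ball of $B$, and $N_r\in\mathbb{Q}$ is a rational upper bound for $\pa\sum r_ie_i\pa_L$. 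Crucially, $S$ is \emph{not} closed under positive rational rescaling, which prevents a scaling argument from forcing $\pa t\pa\leq 1$ rather than the desired $\pa t\pa\leq 1+\eps$. Then $L\models\varphi[a]$ via $y_{p+j}=e_{p+j}$, hence $L\models_{\ap}\varphi[a]$; by the assumed equivalence $K\models_{\ap}\varphi[a]$, so for each $\eps>0$ there are witnesses $e'_{p+j}\in K$ giving a linear $t\colon B\to K$ defined by $t(e_i)=e_i$ and $t(e_{p+j})=e'_{p+j}$ with $\pa t\pa\leq 1+\eps$ (by density) and $t|_{K\cap B}=\id$. This verifies the ideal property.

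The main obstacle lies in the first direction: arranging $(1+\delta)(M_i+\eps')\leq M_i+\eps$ \emph{uniformly} in $i<\omega$ is delicate when the atomic bounds $M_i$ may be unbounded. The key observation is that for the fixed tuple $b$ the actual values $\pa s_i(a,b)\pa_L$ are controlled by the $\ell^1$-norm of the rational coefficients of $s_i$ times $\max(\pa a\pa,\pa b\pa)$, so conjuncts with very large $M_i$ are automatically slack; careful bookkeeping, taking for instance $\eps'\leq\eps/2$ and $\delta$ adapted to the essentially bounded "tight" conjuncts, then completes the argument.
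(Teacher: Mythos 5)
Your overall strategy coincides with the paper's: the direction ``equivalence $\Rightarrow$ ideal'' is proved exactly as in the paper via (a countable variant of) the presentation formula $\pi_B$ of \ref{findim2} with the variables over a basis of $K\cap B$ left free, and the direction ``ideal $\Rightarrow$ equivalence'' is proved exactly as in the paper by spanning a finite-dimensional $B$ on the tuple $a$ together with witnesses $b$, taking $t\colon B\to K$ with $t|_{K\cap B}=\id$ and $\pa t\pa\le 1+\delta$, and replacing $b$ by $t(b)$. Two remarks on the converse direction: your $N_r$ must be chosen within (say) $\eps$ of $\pa\sum r_ie_i\pa$, not merely as ``a rational upper bound,'' or the conclusion $\pa t\pa\le 1+O(\eps)$ does not follow; and the density/rescaling discussion is harmless but not needed (the paper simply indexes $\pi_B$ by \emph{all} rational tuples).

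The genuine problem is the last step of the forward direction, and you correctly identified where it lives but your repair does not work. The claim that ``conjuncts with very large $M_i$ are automatically slack'' is false: the slack $M_i-\pa s_i(a,b)\pa$ need not grow with $M_i$. For instance, with conjuncts $\psi_i=\bigl(\pa 2^i y_1\pa\le M_i\bigr)$ where $M_i$ is a rational with $2^i\pa b_1\pa\le M_i\le 2^i\pa b_1\pa+2^{-i}$, the relaxed conditions $\pa 2^i t(b_1)\pa\le M_i+\eps$ for all $i$ force $\pa t(b_1)\pa\le\pa b_1\pa$ \emph{exactly}, which no bound $\pa t\pa\le 1+\delta$ with $\delta>0$ can deliver; so no choice of $\delta$ ``adapted to the tight conjuncts'' closes the argument, and the bookkeeping you defer cannot be carried out as described. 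What is actually needed is either a map $t$ that is additionally non-expanding on the specific witnesses, or a restriction on the conjunction (finitely many conjuncts, or bounded $M_i$), or a multiplicative notion of approximation as in Henson's positive bounded logic. It is worth noting that the paper's own proof of \ref{ideal3} passes over exactly the same point: the inequality $\pa s_i[a,t(b)]\pa\le\pa s_i[a,b]\pa+\frac{\eps}{2}$ used there tacitly presupposes $\pa s_i[a,b]\pa\le 1$, so your proposal is no worse than the printed argument --- but the step you flagged is not justified in either version.
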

\begin{proof}
Let $K$ be a closed subspace of $L$, $\varphi(x_1,\dots,x_n)$ a  positive-primitive bounded formula and $a_1,\dots,a_n\in K$. Obviously,
$$
K\models_{\ap}\varphi[a_1,\dots,a_n] \Rightarrow 
L\models_{\ap}\varphi[a_1,\dots,a_n].
$$ 
Thus, it is only the reverse implication that matters. 

I. Let $K$ satisfy the reverse implication and consider a finite-dimensional subspace $B$ of $L$ and $\eps>0$. Let $e_1,\dots,e_n$
be a basis of $A=B\cap K$ and $e_1,\dots,e_n,\dots ,e_{n+m}$ a basis of $B$.
Consider the formula 
$$
\varphi(x_1,\dots.x_n)=(\exists x_{n+1},\dots,x_{n+m})\pi_B(x_1,\dots,x_{n+m})
$$ 
where $\pi_B$ is the presentation formula of $B$.
Since 
$L\models\varphi[e_1,\dots,e_n]$, we have $$
K\models_{\ap}\varphi[e_1,\dots,e_n].
$$ 
Following \ref{findim2}, there exists a linear map $f:B\to K$ of norm $\leq 1+\eps$ such that $f(e_i)=e_i$ for $i=1,\dots,n$. Hence $K$ is an ideal in $L$.

II. For the converse, let $K$ be an ideal in $L$ and consider 
$a_1,\dots,a_n\in K$ and a positive-primitive formula 
$$
\varphi(x_1,\dots,x_n)=(\exists y_1,\dots,y_m)\psi(x_1\dots,x_n,y_1,\dots,y_m)
$$
such that $L\models_{\ap}\varphi[a_1,\dots,a_n]$. Let 
$$
\psi(x_1,\dots,x_n,y_1,\dots,y_m)=\bigwedge\limits_{i<\omega}
\psi_i(x_1,\dots,x_n,y_1,\dots,y_m)
$$
where $\psi_i(x_1,\dots,x_n,y_1,\dots,y_m)$ are atomic formulas. Thus
there are $b_1,\dots,b_m\in L$ such that
$L\models\psi_{\eps_1}[a_1,\dots,a_n,b_1,\dots,b_m]$ for every $\eps_1>0$.
Let $B$ be the subspace of $L$ generated by $a_1,\dots,a_n,b_1,\dots,b_m$.

Since $K$ is an ideal in $L$, for every $\eps_2>0$, there is a linear map $f:B\to K$ such that $\pa f\pa\leq 1+\eps_2$ and $f(a_i)=a_i$ for every $j=1,\dots,n$. Now, let $\eps>0$ and take $\eps_1=\eps_2=\frac{\eps}{2}$.
For $\psi_i=\pa t\pa\leq M$, we have
$$
\pa t[a_1,a_n,\dots,f(b_1)\dots,f(b_m)]\pa\leq
\pa t[a_1,a_n,\dots,b_1\dots,b_m]\pa +\frac{\eps}{2}\leq M+\frac{\eps}{2}
+\frac{\eps}{2}=M+\eps.
$$
Hence $K\models_{\ap}\varphi[a_1,\dots,a_n]$.
\end{proof}

A morphism $f:K\to L$ in $\Ban$ is a \textit{u-extension} if there exists an ultrapower $K^\cu$ and an isometry $g:L\to K^\cu$ such that $gf=d_K$
where $d_K:K\to K^\cu$ is the canonical embedding of $K$ to its ultrapower (see \cite[6.3]{S}). Concerning ultraproducts of Banach spaces
one can also consult \cite[p. 7]{I} or \cite[14.1]{G}). Let $K$ be a closed subspace of a Banach space $L$. Following \cite[6.7]{S} (see also \cite[14.2.7]{G}), the embedding $f:K\to L$ is a \textit{u-extension} if and only if for every finite-dimensional subspace $B$ of $L$ and every $\eps>0$,  there exists a strong $\eps$-isometry $t:B\to K$ such that $tx=x$ for every $x\in K\cap B$. Hence u-extensions are ideals. In \cite[1.3]{ALN}, u-extensions are called \textit{almost isometric ideals}. 

\begin{rem}\label{u-ext}
{
\em
(1) Following \ref{equivalent}(3), $f:K\to L$ is an u-extension if and only if for every finite-dimensional subspace $B$ of $L$ and every $\eps>0$,  there exists an $\eps$-isometry $t:B\to K$ such that $tx=x$ for every $x\in K\cap B$. 

(2) Let $A$ be a finite-dimensional Banach space with a basis $e_1,\dots,e_n$. Following \cite[2.1]{GK1}, let $M=\max |a_i|$ where $a=\sum a_ie_i$ for $\pa a\pa\leq 1$. Let $\delta=\frac{\eps}{nM}$.
Then for every Banach space $K$, for every linear map $f:A\to X$ the following implication holds
$$
\max\limits_{i\leq n}\pa f(e_i)\pa\leq\delta\Rightarrow \pa f\pa\leq\eps.
$$
}
\end{rem}
 
\begin{lemma}\label{u-ext1}
Let $f:K\to L$ be an isometry in $\Ban$ such that for every  $\eps>0$ and every commutative square of isometries
$$
\xymatrix@=3pc{
K \ar[r]^{f} & L \\
A\ar [u]^{u} \ar [r]_{g} & B \ar[u]_{v}
}
$$
with $A$ and $B$ finite-dimensional, there exists an isometry $t:B\to K$ such that $tg\sim_\eps u$. Then $f$ is a u-extension. 
\end{lemma}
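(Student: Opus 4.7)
The plan is to verify the characterization of u-extensions supplied by Remark~\ref{u-ext}(1): given a finite-dimensional subspace $B$ of $L$ and $\eps>0$, we must produce an $\eps$-isometry (in the sense of \ref{equivalent}(1)) $t\colon B\to K$ whose restriction to $A:=K\cap B$ is the identity. After identifying $K$ with its isometric image $f[K]\subseteq L$, form the commutative square
$$
\xymatrix@=3pc{
K \ar[r]^{f} & L \\
A\ar [u]^{u} \ar [r]_{g} & B \ar[u]_{v}
}
$$
in which $u,g,v$ are all inclusions and are therefore isometries, and $A,B$ are finite-dimensional. For each $\delta>0$ the hypothesis of the lemma provides an isometry $t_\delta\colon B\to K$ with $t_\delta g\sim_\delta u$, equivalently $\|t_\delta(a)-a\|\leq\delta\|a\|$ for every $a\in A$. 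The trouble is that $t_\delta$ is a genuine isometry but merely \emph{approximately} fixes $A$, whereas we need a map which \emph{exactly} fixes $A$ at the price of being only \emph{approximately} an isometry.

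I would remedy this by the same kind of basis-correction trick used in the second half of the proof of Proposition~\ref{ideal}. Pick a basis $e_1,\dots,e_n$ of $A$ and extend it to a basis $e_1,\dots,e_n,e_{n+1},\dots,e_{n+m}$ of $B$. Define $t'\colon B\to K$ by
$$
t'(e_i)=e_i\quad(i\leq n),\qquad t'(e_i)=t_\delta(e_i)\quad(i>n).
$$
Then $t'$ fixes $A$ pointwise, and the linear map $h:=t'-t_\delta$ vanishes on $e_{n+1},\dots,e_{n+m}$ while satisfying $\|h(e_i)\|=\|e_i-t_\delta(e_i)\|\leq\delta\|e_i\|$ for $i\leq n$.

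Applying Remark~\ref{u-ext}(2) to the basis $e_1,\dots,e_{n+m}$ of $B$ yields a constant $\delta_0>0$ (depending only on $B$ and $\eps$) such that any linear map $h\colon B\to K$ with $\max_i\|h(e_i)\|\leq\delta_0$ satisfies $\|h\|\leq\eps$. Choosing $\delta$ so small that $\delta\max_{i\leq n}\|e_i\|\leq\delta_0$, we obtain $\|t'-t_\delta\|\leq\eps$. Since $t_\delta$ is an isometry, for every $x\in B$ with $\|x\|\leq 1$ we have
$$
\bigl|\|t'(x)\|-\|x\|\bigr|=\bigl|\|t'(x)\|-\|t_\delta(x)\|\bigr|\leq\|t'(x)-t_\delta(x)\|\leq\eps,
$$
so $t'$ is an $\eps$-isometry in the sense of Remark~\ref{equivalent}(1). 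Combined with $t'|_A=\id_A$, this establishes the criterion of Remark~\ref{u-ext}(1), hence $f$ is a u-extension. The only subtle step is the passage from the approximate isometry of $t_\delta$ on $A$ to the norm estimate on $t'-t_\delta$, which is precisely what Remark~\ref{u-ext}(2) is designed to deliver.
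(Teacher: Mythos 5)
Your proposal is correct and follows essentially the same route as the paper's proof: extend a basis of $A=K\cap B$ to a basis of $B$, replace the approximate isometry $t_\delta$ by the map fixing $A$ and agreeing with $t_\delta$ on the remaining basis vectors, and use Remark~\ref{u-ext}(2) to bound the operator norm of the difference, concluding via Remark~\ref{equivalent}(1) and Remark~\ref{u-ext}(1). The only (harmless) difference is that you track the factor $\max_i\|e_i\|$ explicitly when choosing $\delta$, which the paper's wording elides.
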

\begin{proof}
Take $\eps>0$. Let $e_1,\dots,e_m,\dots,e_n$ be a basis of $B$ such that $e_1,\dots,e_m$ is a basis of $A$. Choose $\delta$ from \ref{u-ext}(2) for the basis $e_1,\dots,e_n$ and $\eps$. Let $t':B\to K$ be an isometry
such that $t'g\sim_\delta u$. Take $t:B\to K$ such that $t(e_i)=e_i$ for
$i=1,\dots,m$ and $t(e_i)=t'(e_i)$ for $i=m+1,\dots,n$. Since 
$\pa (t'-t')(e_i)\pa\leq\delta$ for every $i=1,\dots,n$,
\ref{u-ext}(2) implies that $\pa t-t'\pa\leq\eps$. Hence
$$
|\pa t(x)\pa-\pa x\pa|=|\pa t(x)-\pa t'(x)\pa|\leq\pa t(x)-t'(x)\pa\leq\eps
$$
for every $x$ such that $\pa x\pa\leq 1$. Following \ref{equivalent}(1),
$t$ is an $\eps$-isometry and the result follows from \ref{u-ext}(1).
\end{proof}

\begin{rem}\label{u-ext2}
{
\em
(1) Concerning a converse, in I. of the proof of \ref{ideal}, $t$ is injective (see \ref{equivalent}4). But we would need that it is an isometry.  

(2) The property in \ref{u-ext1} says that $f$ is pure in the category
$\Ban_{\iso}$ of Banach spaces with isometries as morphisms.
}
\end{rem}

\begin{propo} \label{ultra}
Let $\cu$ be an ultrafilter on a set $I$, $K$ a Banach space
and $d_K:K\to K^\cu$ the canonical embedding of $K$ to its ultrapower. Then
$d_K$ is pure.
\end{propo}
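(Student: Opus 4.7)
The plan is to recognise that $d_K$ is a u-extension in the most transparent way possible, and then to chain together the results already assembled in the excerpt. First, recall that $d_K$ is an isometry: by construction of the Banach-space ultrapower the norm on $K^\cu$ is $\pa [x_i]\pa=\lim_\cu \pa x_i\pa$, and since the canonical embedding sends $x\in K$ to the class of the constant sequence $[x]$, one has $\pa d_K(x)\pa=\pa x\pa$. In particular $d_K$ identifies $K$ with a closed subspace of $K^\cu$.

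Next, $d_K$ is a u-extension essentially by definition: take $L=K^\cu$ and $g=\id_{K^\cu}$, so that $g$ is an isometry with $g\circ d_K=d_K$. Invoking the statement just before Remark \ref{u-ext} that every u-extension is an ideal (cited there from \cite{S} and \cite{G}), we conclude that $K$ is an ideal in $K^\cu$ via $d_K$. Corollary \ref{ideal2}, which identifies pure morphisms in $\Ban$ with embeddings of ideals, then yields that $d_K$ is pure.

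There is no genuine obstacle in this argument; the nontrivial content has been absorbed into the classical fact that u-extensions are ideals, which itself rests on a {\L}o\'s-style theorem for positive bounded formulas. For a self-contained alternative one could verify directly, via Proposition \ref{ideal3}, that $d_K$ both preserves and reflects approximate satisfaction of positive-primitive formulas on elements of $K$: preservation is immediate from the construction of the ultrapower, and reflection follows by using the description of the ultrapower norm as a $\cu$-limit to extract, from an approximate witness tuple in $K^\cu$, representative sequences whose $\cu$-almost-every coordinate provides an honest approximate witness in $K$. Either route produces the stated conclusion with no additional machinery beyond what the excerpt has made available.
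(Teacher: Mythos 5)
Your argument is correct and non-circular, but it takes a genuinely different route from the paper. The paper's proof is purely categorical and self-contained: it writes $K^\cu$ as a directed colimit of the powers $K^U$, $U\in\cu$, so that $d_K$ becomes a directed colimit of the diagonals $d_U:K\to K^U$; each $d_U$ is a split monomorphism (split by any coordinate projection), hence pure by \ref{split}(2), and one concludes by the closure of pure morphisms under directed colimits in $\Ban^\to$ (\ref{closure}, or directly via the characterization \ref{char}). Your route instead observes that $d_K$ is trivially a u-extension and then invokes the statement preceding \ref{u-ext} that u-extensions are ideals, followed by \ref{ideal2}. This is logically sound within the paper's narrative and introduces no circularity (neither \ref{ideal} nor \ref{ideal2} depends on \ref{ultra}), but it outsources all of the actual content to the nontrivial local characterization of u-extensions cited from \cite{S}, which is a local-reflexivity-type theorem; the paper's argument needs nothing beyond its own machinery and, arguably, is the point of having \ref{char} in the first place.

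One caution about your proposed ``self-contained alternative'' via \ref{ideal3}: the reflection step is not as immediate as you suggest. Positive-primitive formulas here allow \emph{countably infinite} conjunctions as well as exact equalities $t_1=t_2$; for a fixed $\eps$ each norm-conjunct gives a set in $\cu$ of coordinates with honest approximate witnesses, but an infinite family of such sets need not have intersection in $\cu$ when $\cu$ is countably incomplete, and equality in the ultrapower only means the $\cu$-limit of the norms of the differences is $0$, not coordinatewise equality. So that aside should not be regarded as a complete proof; your main argument, or the paper's colimit argument, is what actually closes the case.
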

\begin{proof}
The ultrapower $K^\cu$ is a directed colimit of powers $K^U$, $U\in\cu$
(cf. \cite[5.17]{AR1}) and $d_K$ is a directed colimit of diagonals $d_U:K\to K^U$. Since $d_U$ are split monomorphisms, 
\ref{char} implies that $d_K$ is pure.
\end{proof}

\begin{lemma}\label{ultra2}
Ultrapowers preserve pure morphisms.
\end{lemma}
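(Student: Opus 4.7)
My plan is to reduce to the ideal characterization of pure morphisms given by Corollary \ref{ideal2}. Let $f:K\to L$ be pure; then $f$ is an isometry identifying $K$ with an ideal of $L$, and since the ultrapower functor is enriched and preserves isometries, $f^\cu:K^\cu\to L^\cu$ is again an isometry, allowing me to regard $K^\cu$ as a closed subspace of $L^\cu$. It therefore suffices to check that this subspace is an ideal: given a finite-dimensional $B'\subseteq L^\cu$ and $\eps>0$, I need to produce a linear map $t':B'\to K^\cu$ with $\pa t'\pa\leq 1+\eps$ restricting to the identity on $K^\cu\cap B'$.

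The construction will be coordinatewise. First, I will pick a basis $e'_1,\dots,e'_m$ of the finite-dimensional subspace $K^\cu\cap B'$ and extend it to a basis $e'_1,\dots,e'_n$ of $B'$; then choose bounded representing sequences $(e^j_k)_j$ for each $e'_k$, arranged so that $e^j_k\in K$ whenever $k\leq m$ and $e^j_k\in L$ in general. For each index $j$, set $B_j:=\operatorname{span}(e^j_1,\dots,e^j_n)\subseteq L$, and use the ideal property of $K$ inside $L$ to obtain a linear map $t_j:B_j\to K$ with $\pa t_j\pa\leq 1+\eps$ and $t_j|_{K\cap B_j}=\id$. Crucially, for $k\leq m$ the representative $e^j_k$ lies in $K\cap B_j$, hence $t_j(e^j_k)=e^j_k$. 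I will then define $t':B'\to K^\cu$ linearly by $t'(e'_k):=[(t_j(e^j_k))_j]$. The uniform pointwise bound $\pa t_j\pa\leq 1+\eps$ yields $\pa t'\pa\leq 1+\eps$ via the ultrapower norm formula, and for any $x'=\sum_{k\leq m}c_k e'_k\in K^\cu\cap B'$ one reads off $t'(x')=\sum_{k\leq m}c_k[(e^j_k)_j]=x'$.

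The main point to get right is the distinction between the sectionwise intersections $K\cap B_j$ and the ultraproduct intersection $K^\cu\cap B'$: a priori, an element of $K^\cu\cap B'$ is only approximately representable in $K$ coordinatewise, so generic representatives of a basis of $B'$ need not meet $K$ at any given index $j$. Choosing a basis of $K^\cu\cap B'$ first, with representatives that honestly lie in $K$, and only afterwards extending to a basis of $B'$, is the maneuver that aligns the pointwise ideal approximations at each $j$ with the coordinates on which $t'$ must act as the identity. Once this is in place the rest is a routine unwinding of the ultrapower norm, and the proof concludes by invoking Corollary \ref{ideal2} in reverse to conclude that $f^\cu$ is pure.
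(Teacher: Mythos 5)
Your proof is correct, but it takes a genuinely different route from the paper's. The paper argues categorically: it first shows that each power $f^U:K^U\to L^U$ ($U\in\cu$) is pure, by composing with the product projections and assembling the resulting $t_i:B\to K$ into an induced map $t:B\to K^U$ with $tg\sim_\eps u$, and then writes $f^\cu$ as a directed colimit of the $f^U$ and invokes Proposition \ref{closure} (closure of pure morphisms under directed colimits in $\Ban^\to$). You instead translate purity into the ideal condition via Corollary \ref{ideal2} and run a \L o\'s-type coordinatewise construction in the ultrapower: choosing a basis of $K^\cu\cap B'$ first, with representatives honestly in $K$, and only then extending to a basis of $B'$ is exactly the right maneuver, and the uniform bound $\pa t_j\pa\leq 1+\eps$ passes to the ultralimit to give $\pa t'\pa\leq 1+\eps$ with $t'$ fixing $K^\cu\cap B'$. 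What your approach buys is an explicit, quantitative construction that visibly generalizes to ultraproducts of arbitrary families of ideal embeddings and makes the connection with the model-theoretic preservation of approximate positive-primitive satisfaction (Proposition \ref{ideal3}) transparent; what the paper's approach buys is brevity and economy, since it reuses the already-established closure properties of pure morphisms and never leaves the categorical framework. One small point worth making explicit if you write this up: the image $f^\cu[K^\cu]$ is closed in $L^\cu$ (it is the isometric image of a complete space), and every element of it admits a bounded representing family lying entirely in $f[K]$, which is what licenses your choice of representatives for $e'_1,\dots,e'_m$.
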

\begin{proof}
Let $f:K\to L$ be pure. We have to show that $f^\cu:K^\cu\to L^\cu$ is pure. It is easy to see that $f^U:K^U\to L^U$ is pure for every $U\in\cu$.
Indeed, consider a commutative square 
$$
\xymatrix@=3pc{
K ^U\ar[r]^{f^U} & L^U \\
A\ar [u]^{u} \ar [r]_{g} & B \ar[u]_{v}
}
$$
with $A$ and $B$ finite-dimensional. Let $p^i:K^U\to K$ and $q_i:L^U\to L$
be projections for $i\in U$. Let $\eps>0$. Then, for every $i\in U$, there
is $t_i:B\to K$ such that $t_ig\sim_\eps p_iu$. Then the induced morphism 
$t:B\to K^U$ satisfies $tg\sim_\eps u$.

Since $f^\cu$ is a directed colimit of $f^U$, the result follows 
from \ref{closure}.
\end{proof}

\begin{rem}\label{ultra3}
{
\em
The category $\Mod(R)$ of modules over a ring $R$ is locally finitely presentable. Pure-injective modules (i.e., modules injective with respect to pure morphisms) coincide with algebraically compact ones (see \cite[4.3.11]{Pr}). There is a set $I$ and an ultrafilter $\cu$ on $I$ such that $K^\cu$ is algebraically compact for every module $K$ 
(\cite[4.2.18]{Pr}). Hence pure morphisms and u-extensions coincide. 

Indeed, let $f:K\to L$ be pure. Like in \ref{ultra2}, $f^\cu$ is pure. Since $K^\cu$ is pure-injective, $f^\cu$ is split by $s$. Then $sd_L$ makes $f$ an u-extension. Conversely, in the same way as in \ref{ultra},
one proves that the canonical embedding $d_K:K\to K^\cu$ is pure. Since
pure morphisms are left-cancellable, every u-extension is pure.
}
\end{rem}

\section{Injectivity}
\begin{defi}\label{apinj}
{
\em
Given a class $\ch$ of morphisms in a $\CMet$-enriched category $\ck$, an object $K$ is called \textit{approximately injective} with respect to $\ch$ if for every $h:A\to B$ in $\ch$, every $\eps>0$ and every morphism $f:A\to K$ there exists a morphism $g:B\to K$ such that $gh\sim_\eps f$.

The class of objects approximately injective with respect to $\ch$ will be denoted as $\Inj_{\ap}\ch$.
}
\end{defi}

\begin{propo}\label{adj2}
Let $U:\ck\to\cl$ and $F:\cl\to\ck$ be an enriched adjunction. Then $UK$ is approximatively injective with respect to $\ch$ iff $K$ is approximately injective with respect to $F(\ch)$.
\end{propo}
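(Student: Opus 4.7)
The plan is to just transport the defining data of approximate injectivity across the adjunction, using the fact that the hom-isomorphism $\phi_{A,K}\colon\ck(FA,K)\to\cl(A,UK)$ is an isomorphism of metric spaces (that is what "enriched adjunction" provides) and is natural in $A$.

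First I would fix $h\colon A\to B$ in $\ch$, an $\eps>0$, and on the $\cl$-side a morphism $f\colon A\to UK$. Via $\phi$ this corresponds to a morphism $\bar f=\phi_{A,K}^{-1}(f)\colon FA\to K$, and any candidate lift $g\colon B\to UK$ corresponds to $\bar g=\phi_{B,K}^{-1}(g)\colon FB\to K$. By naturality of the adjunction in the first variable, precomposition with $Fh$ on the $\ck$-side matches precomposition with $h$ on the $\cl$-side, i.e.\ $\phi_{A,K}(\bar g\cdot Fh)=\phi_{B,K}(\bar g)\cdot h=g\cdot h$.

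Because $\phi_{A,K}$ is an isometry, it preserves and reflects the relation $\sim_\eps$. Thus $g\cdot h\sim_\eps f$ in $\cl(A,UK)$ if and only if $\bar g\cdot Fh\sim_\eps \bar f$ in $\ck(FA,K)$. This gives both directions simultaneously: an approximate lift of $f$ along $h$ in $\cl$ exists exactly when an approximate lift of $\bar f$ along $Fh$ in $\ck$ exists. Quantifying over all $h\in\ch$, all $\eps>0$, and all $f\colon A\to UK$ (equivalently, all $\bar f\colon FA\to K$, since $\phi$ is a bijection), we obtain the stated equivalence.

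There is essentially no obstacle; the only thing to be careful about is to use the \emph{metric} version of the adjunction (enrichment), not just the underlying set-level bijection, since otherwise the $\eps$-tolerance on one side would not transfer to the same $\eps$ on the other side. Naturality of $\phi$ in the $\cl$-variable is what gives the compatibility with composition by $h$ and $Fh$, and that is the one small verification to record explicitly in the write-up.
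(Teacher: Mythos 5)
Your proposal is correct and follows essentially the same route as the paper: both arguments transport the lifting problem across the adjunction via the adjoint transpose, using that the enriched hom-isomorphism is an isometry (so it preserves and reflects $\sim_\eps$) and is natural in the first variable (so precomposition with $h$ corresponds to precomposition with $Fh$). Your write-up merely makes explicit the naturality step that the paper compresses into the identity $\widetilde{gh}=\tilde{g}\cdot Fh$.
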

\begin{proof}
$UK$ is approximately injective with respect to $\ch$ iff for every $h:A\to B$ in $\ch$ and every morphism $f:A\to UK$ there exists a morphism $g:B\to UK$ such that $gh\sim_\eps f$.
This is the same as $\tilde{g}Fh= \tilde{gh}\sim_\eps \tilde{f}$, which is equivalent to $K$ being approximately injective with respect to $F(\ch)$. 
\end{proof}

\begin{rem}\label{lind}
{
\em
Lindenstrauss spaces are Banach spaces which are ideals in every superspace
(\cite[4.1]{ALN}). Hence $K$ is a Lindenstrauss space if and only if every isometry $K\to L$ is pure. This implies the well-known characterization of Lindenstrauss spaces as Banach spaces approximately injective with respect to isometries between finite-dimensional Banach spaces.

Indeed, let $K$ be Lindenstrauss, $h:A\to B$ be an isometry between finite-dimensional Banach spaces and $f:A\to K$. Consider the pushout
$$
\xymatrix@=3pc{
K \ar[r]^{\bar{h}} & \bar{L} \\
A\ar [u]^{f} \ar [r]_{h} & B \ar[u]_{\bar{f}}
}
$$ 
Since $\bar{h}$ is an isometry, it is pure and thus, for every $\eps>0$,  there is $g:B\to K$ such that $gh\sim_\eps f$. Hence $L$ is
approximately injective with respect to $h$. The converse is evident.
}
\end{rem} 

\begin{lemma}\label{def}
Let $\ch$ be a set of morphisms between finite-dimensional Banach spaces.
Then $\Inj_{\ap}\ch$ is closed under products, directed colimits and ideals.
\end{lemma}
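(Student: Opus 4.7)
The plan is to verify each of the three closure properties by a direct approximate lifting argument, the finite-dimensionality of the domains and codomains of morphisms in $\ch$ being crucial throughout. For products, the product $\prod_j K_j$ in $\Ban$ is the space of bounded tuples under the supremum norm, so for $f,g\colon A\to\prod_j K_j$ the hom-metric is $d(f,g)=\sup_j d(p_j f, p_j g)$ where the $p_j$ are the coordinate projections. Given $h\colon A\to B$ in $\ch$ and $f\colon A\to\prod_j K_j$, I set $f_j:=p_j f$; approximate injectivity of each $K_j$ supplies $g_j\colon B\to K_j$ with $g_j h\sim_\eps f_j$; the tuple $g\colon B\to\prod_j K_j$ determined by the $g_j$ then satisfies $gh\sim_\eps f$ upon taking the supremum of the coordinate-wise bounds.

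For directed colimits, let $(k_i\colon K_i\to K)_{i\in I}$ be a directed colimit of objects $K_i\in\Inj_{\ap}\ch$. Given $h\colon A\to B$ in $\ch$ and $f\colon A\to K$, the finite-dimensionality of $A$ together with Theorem \ref{finpres}(1) produces some $i\in I$ and $f'\colon A\to K_i$ with $k_i f'\sim_{\eps/2} f$; approximate injectivity of $K_i$ then yields $g'\colon B\to K_i$ with $g' h\sim_{\eps/2} f'$. Post-composing with the nonexpansive $k_i$ gives $k_i g' h\sim_{\eps/2} k_i f'\sim_{\eps/2} f$, so $g:=k_i g'$ does the job.

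For ideals, suppose $L\in\Inj_{\ap}\ch$ and $K$ is an ideal in $L$, so by Corollary \ref{ideal2} the embedding $f\colon K\to L$ is pure. Given $h\colon A\to B$ in $\ch$ and $u\colon A\to K$, approximate injectivity of $L$ applied to $fu\colon A\to L$ furnishes $g'\colon B\to L$ with $g' h\sim_{\eps/2} fu$. The resulting $(\eps/2)$-commutative square between $f$ and $h$ has finite-dimensional $A$ and $B$, so Lemma \ref{pure2} extracts $t\colon B\to K$ with $th\sim_\eps u$, as required. The only genuinely non-routine ingredient is this last step: the equivalence of barely and weakly ap-pure morphisms (Lemma \ref{pure2}) is exactly what allows the $(\eps/2)$-commutativity introduced by approximate injectivity to be absorbed into the final $\eps$-estimate rather than obstructing the lift.
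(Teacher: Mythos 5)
Your proof is correct and follows essentially the same route as the paper: the directed-colimit step is identical (Theorem \ref{finpres}(1) followed by an $\frac{\eps}{2}$--$\frac{\eps}{2}$ split). The only difference is that for products and for ideals the paper simply cites \cite[4.3]{RT} and \cite[4.5]{RT} together with Proposition \ref{ideal}, whereas you supply the direct coordinate-wise argument for the $\sup$-norm product and the purity argument via Corollary \ref{ideal2} and Lemma \ref{pure2}; both of these direct arguments are sound, and your bookkeeping of the factor $2$ in Lemma \ref{pure2} against the $\frac{\eps}{2}$-commutativity is exactly right.
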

\begin{proof}
Following \cite[4.3]{RT}, $\Inj_{\ap}\ch$ is closed under products. Let $(k_i:K_i\to K)_{i\in I}$ be a directed colimit of $K_i\in\Inj_{\ap}\ch$. Consider $h:A\to B$ in $\ch$, $f:A\to K$ and take $\eps>0$. Following \ref{finpres}, there exists $i\in I$ and $f':A\to K_i$ such that $k_if'\sim_{\frac{\eps}{2}} f$. There
is $g:B\to K_i$ such that $gh\sim_{\frac{\eps}{2}} f'$. Hence 
$$
k_igh\sim_{\frac{\eps}{2}}k_if'\sim_{\frac{\eps}{2}} f
$$
Thus $(k_ig)h\sim_\eps f$. Hence $K\in\Inj_{\ap}$.

The closure if $\ck$ under ideals follows from \cite[4.5]{RT}, \cite[3.6(2)]{RT} and \ref{ideal}.
\end{proof}

\begin{rem}\label{def1}
{
\em
(1) In locally finitely presentable categories, classes of objects closed under products, directed colimits and pure submodules are called \textit{definable} (see \cite{KR}). They coincide with classes of objects injective with respect to a set of morphisms between finitely presentable objects (see \cite[4.11]{AR1}). We do not know whether this holds in $\Ban$.

(2) Let $\lambda$ be an uncountable regular cardinal. In \cite[5.5]{RT}, there are characterized classes $\Inj_{\ap}\ch$ of Banach spaces where the domains and the codomains of morphisms in $\ch$ have density character $<\lambda$. They are classes closed under products,
$\lambda$-directed colimits and $\lambda$-pure subspaces. Here, $\lambda$-pure morphisms are defined as in \ref{pure} with $A$ and $B$ of density character $<\lambda$. Following \ref{weakly}, they coincide with weakly
$\lambda$-ap-pure morphisms.
}
\end{rem}

\begin{defi}\label{ideal4}
{
\em
A morphism $f:K\to L$ in $\Ban$ will be called $(\omega,\omega_1)$-pure if in every commutative square
$$
\xymatrix@=3pc{
K \ar[r]^{f} & L \\
A\ar [u]^{u} \ar [r]_{g} & B \ar[u]_{v}
}
$$
where $A$ is finite-dimensional and $B$ separable and for every $\eps>0$ there exists $t:B\to K$ such that $tg\sim_\eps u$.
}
\end{defi}

\begin{theo}\label{char1}
Classes of Banach spaces closed under products, directed colimits and $(\omega,\omega_1)$-pure subspaces are precisely classes of Banach spaces approximately injective with respect to a set of morphisms having finite-dimensional domain and separable codomain.
\end{theo}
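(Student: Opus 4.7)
\emph{Plan.} The theorem asserts an equivalence; I handle the two implications separately.

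\emph{Sufficiency} ($\Leftarrow$). Assume $\cl=\Inj_\ap\ch$ for a set $\ch$ of morphisms $h:A\to B$ with $A$ finite-dimensional and $B$ separable; this direction adapts Lemma~\ref{def}. Closure under products is routine. For closure under directed colimits $(k_i:K_i\to K)$ with $K_i\in\cl$, given $h:A\to B$ in $\ch$ and $f:A\to K$, Theorem~\ref{finpres}(1) factors $f$ through some $k_if'$ up to $\eps/2$, whereupon approximate injectivity of $K_i$ at $h$ supplies $g':B\to K_i$ with $g'h\sim_{\eps/2}f'$; the composite $k_ig'$ is the required lift. For closure under $(\omega,\omega_1)$-pure subspaces, let $f:K\to L$ be $(\omega,\omega_1)$-pure with $L\in\cl$; given $h:A\to B$ in $\ch$ and $u:A\to K$, approximate injectivity of $L$ at $h$ yields $v:B\to L$ with $vh\sim_{\eps/4}fu$. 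The $(\eps/4)$-pushout of $h$ along $\id_A$ produces morphisms $\bar h:A\to C$ and $h_{\eps/4}:B\to C$ with $C$ \emph{separable} by \ref{e-push}, and its universal property yields $t:C\to L$ with $t\bar h=fu$ and $th_{\eps/4}=v$. Since $A$ is finite-dimensional and $C$ separable, $(\omega,\omega_1)$-purity of $f$ provides $w:C\to K$ with $w\bar h\sim_{\eps/2}u$, and then $g:=wh_{\eps/4}$ satisfies $gh\sim_\eps u$. This is the verbatim analogue of Lemma~\ref{pure2} with $C$ separable rather than finite-dimensional.

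\emph{Necessity} ($\Rightarrow$). Assume $\cl$ is closed under the three operations, and let $\ch$ be the (set, up to isomorphism) of all morphisms $h:A\to B$ with $A$ finite-dimensional, $B$ separable, and every $K\in\cl$ approximately injective to $h$. Tautologically $\cl\subseteq\Inj_\ap\ch$. For the reverse inclusion, observe that every $\aleph_1$-pure morphism is a fortiori $(\omega,\omega_1)$-pure (the latter tests only the lifting property on the smaller class of squares with finite-dimensional upper-left corner), so $\cl$ is closed under $\aleph_1$-pure subspaces; combined with closure under products and $\aleph_1$-directed colimits, \cite[Theorem~5.5]{RT} (cf.\ \ref{def1}(2)) gives $\cl=\Inj_\ap\ch'$ for some set $\ch'$ of morphisms between separable Banach spaces. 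It therefore suffices, for each $K\in\Inj_\ap\ch$, to construct an $(\omega,\omega_1)$-pure embedding $K\to L$ with $L\in\cl$; closure of $\cl$ under $(\omega,\omega_1)$-pure subspaces will then yield $K\in\cl$. The object $L$ is built by a transfinite chain $(K_\alpha)_{\alpha<\omega_1}$ of $(\omega,\omega_1)$-pure extensions starting from $K_0=K$: at each successor stage $\alpha\mapsto\alpha+1$ one adjoins approximate solutions to outstanding lifting problems $(h',f,\eps)$ with $h'\in\ch'$ and $f:A'\to K_\alpha$, by writing $A'$ as the closure of an increasing chain of finite-dimensional subspaces $A_n$, packaging the countably many restricted problems $(h'|_{A_n},f|_{A_n})$ into a single $\eps$-pushout amalgamation with separable codomain, and using approximate injectivity of $K_\alpha$ to a corresponding morphism in $\ch$. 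Limit stages are taken by directed colimits; the resulting $L=\colim_{\alpha<\omega_1}K_\alpha$ is approximately injective to every $h'\in\ch'$, hence lies in $\cl$, while the map $K\to L$ is a directed colimit of $(\omega,\omega_1)$-pure maps.

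\emph{Main obstacle.} The technical heart lies in the packaging step of the necessity direction: one must show that the countably many finite-dimensional sub-problems obtained by restricting $h':A'\to B'$ to a dense chain $A_n\subseteq A'$ can be amalgamated into a single morphism \emph{that actually lies in} $\ch$, so that $K$'s approximate injectivity produces a genuinely coherent approximate lift of the original separable-domain problem. A naive term-by-term approach fails because the unit ball of $\ck(B',K)$ is not weakly compact, so a sequence of partial solutions need not converge; rigidification via an $\eps$-pushout (or, equivalently, via an $\aleph_1$-saturated ultrapower together with the purity of $K\to K^\cu$ from \ref{ultra}) is essential, and verifying that the packaged morphism is universally approximately injective over $\cl$ uses closure of $\cl$ under products and directed colimits. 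A more conceptual route would characterize $(\omega,\omega_1)$-pure embeddings as elementary in the positive bounded logic of \ref{ideal3} extended to countably existential blocks with countable conjunctions, reducing the construction of $L$ to an $\aleph_1$-saturation argument in a suitable ultrapower of $K$.
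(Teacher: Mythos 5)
Your sufficiency direction is correct and essentially coincides with part I of the paper's proof: closure under products and directed colimits is as in Lemma \ref{def}, and your explicit $\eps$-pushout argument for closure under $(\omega,\omega_1)$-pure subspaces is exactly the separable-codomain analogue of Lemma \ref{pure2} that the paper invokes via Remark \ref{weakly}.

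The necessity direction has a genuine gap, and it is precisely the step you yourself flag as the ``main obstacle'': you never show how to amalgamate the countably many restricted problems $(h'|_{A_n},f|_{A_n})$ into a single morphism that actually lies in $\ch$, nor why the successor extensions $K_\alpha\to K_{\alpha+1}$ (and hence the colimit map $K\to L$) are $(\omega,\omega_1)$-pure. Neither is routine: approximate solutions to the restricted problems need not cohere (as you note, there is no compactness in $\Ban(B',K)$ to pass to a limit), and an extension obtained by $\eps$-pushout along $h'$ has no reason to be $(\omega,\omega_1)$-pure over $K_\alpha$ unless the adjoined solutions already approximately factor through $K_\alpha$ --- which is exactly what has to be proved. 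You would also need $K_\alpha\in\Inj_{\ap}\ch$ for $\alpha>0$ to keep the recursion going, and this is not established. The paper avoids the transfinite construction entirely: closure under products and directed colimits makes $\cl$ weakly reflective (as in \cite[4.8]{RT}), so it suffices to show that the weak reflection $r:K\to K^\ast$ of any $K\in\Inj_{\ap}\ch$ is $(\omega,\omega_1)$-pure. The key device is a Claim, proved by contradiction using Theorem \ref{finpres} and a chain of finite-dimensional subspaces together with their weak reflections, that every square $(u,v):h\to r$ with $A$ finite-dimensional and $B$ separable $\eps$-factors through a weak reflection $\bar r:\bar K\to\bar K^\ast$ of a \emph{finite-dimensional} $\bar K$; expressing $\bar K^\ast$ as an $\omega_1$-directed colimit of separable subspaces then cuts $\bar r$ down to a morphism $\tilde r\in\ch$, and approximate injectivity of $K$ to $\tilde r$ yields the required approximate lift. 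That Claim is the missing ingredient your ``packaging step'' would have to supply.
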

\begin{proof}
I. Like in \ref{def}, $\Inj_{\ap}\ch$ is closed under products and directed colimits. The closure under $(\omega,\omega_1)$-pure subspaces
follows from the proof of \cite[4.5]{RT}. Indeed, let $f:K\to L$ be 
$(\omega,\omega_1)$-pure and $L\in\Inj_{\ap}\ch$. Consider $u:A\to K$, $h:A\to B$ in $\ch$ and $\eps>0$. There exists $v:B\to L$ such that $vh\sim_{\frac{\eps}{2}}fu$. Following \ref{weakly}, there is $t:B\to K$ such that $th\sim_\eps u$. Hence $K\in\Inj_{\ap}\ch$. 

II. Let $\cl$ be closed under products, directed colimits and $(\omega,\omega_1)$-pure subspaces. Like in the proof of \cite[4.8]{RT}, $\cl$ is weakly reflective in $\Ban$. Hence every Banach space $K$ comes with a morphism $r_K:K\to K^\ast$, $K^\ast\in\cl$ such that every object of $\cl$ is injective with respect to $r_K$. Let $\ch$ consist of all morphisms $f:A\to B$ such that $A$ is finite-dimensional, $B$ is separable and every object of $\cl$ is approximately injective with respect to $f$. We have to prove that $\Inj_{\ap}\ch\subseteq\cl$. For this, it suffices to show that, for $K\in\Inj_{\ap}\ch$,  any weak reflection $r:K\to K^\ast$ is $(\omega,\omega_1)$-pure.

Thus, given $K\in\Inj_{\ap}\ch$ and a weak reflection $r:K\to K^*$ in $\cl$, we are to prove that in any commutative square
\vskip 2 mm
$$
\xymatrix@C=3pc@R=3pc{
A \ar[r]^{h} \ar [d]_u &
B \ar [d]^{v}\\
K\ar[r]_{r} & K^*
}
$$
\vskip 2 mm\noindent
with $A$ finite-dimensional and $B$ separable, the morphism $u$ $\eps$-factors through $h$.  

\vskip 2mm
\noindent {\it Claim}: For every $\eps>0$, there is an $\eps$-factorization $u\sim_{\eps} u_2\cdot u_1$ and an $\eps$-homomorphism $(u_1,v_1):h\to \bar r$ 
(i.e., $v_1h\sim_\eps \bar{r}u_1$) where $\bar r:\bar K\to\bar K^*$ 
is a weak reflection into $\cl$ of a finite-dimensional $\bar K$.

\vskip2mm
\noindent {\it Proof of claim}. Following the proof of \cite[4.8]{RT}, for every $\eps>0$ there is a factorization $u=u_2\cdot u_1$ and an $\eps$-homomorphism $(u_1,v_1):h\to \bar r$ where 
$\bar r:\bar K\to\bar K^*$ is a weak reflection into $\cl$ of a separable $\bar K$. Assume that the claim is false for $\eps>0$. Then it is false for $\frac{\eps}{6}$. Take an $\frac{\eps}{6}$-homomorphism $(u_1,v_1)$ from the claim with $\bar{K}$ separable.
We express $\bar K$ as a colimit of a smooth chain $k_{ij}:K_i\to K_j$ $(i\leq j<\omega)$ of finite-dimensional subspaces $K_i$. This provides weak reflections $r_i:K_i\to K_i^\ast$ into $\cl$ such that their colimit $r_\omega:\bar K\to K_\omega^\ast$ factorizes through $\bar r$, i.e.,
$r_\omega=s\bar r$ for some $s:\bar K^\ast\to K_\omega^\ast$. Since 
$\bar ru_1\sim_{\frac{\eps}{6}} v_1h$, we have 
$r_\omega u_1=s\bar ru_1\sim_{\frac{\eps}{6}} sv_1h$, so that $(u_1,sv_1):h\to r_\omega$ is an $\frac{\eps}{6}$-homomorphism. 
Following \ref{finpres}, there is $n<\omega$, $u':A\to K_n$ and 
$v':B\to K^\ast_n$ such that $k_nu'\sim_{\frac{\eps}{6}}u_1$ and 
$k_n^\ast v'\sim_{\frac{\eps}{6}}sv_1$. Hence
$$
k^\ast_nv'h\sim_{\frac{\eps}{6}}sv_1h\sim_{\frac{\eps}{6}}r_\omega u_1
\sim_{\frac{\eps}{6}}r_\omega k_nu'=k_n^\ast r_nu'.
$$
Hence $k_n^\ast v'h\sim_{\frac{\eps}{2}} k_n^\ast r_nu'$. Following
\ref{finpres}, there is $n\leq m<\omega$ such that $$
k^\ast_{nm}v'h\sim_{\eps} k^\ast_{nm}r_nu'=r_mk_{nm}u'.
$$ 
Since 
$$
u_1\sim_{\frac{\eps}{6}}k_nu'=k_mk_{nm}u',
$$ 
we get the claim for $\eps$, which yields a contradiction. 

We are ready to prove that $u$ $\eps$-factors through $h$. Let us consider an $\frac{\eps}{3}$-factorization and an $\frac{\eps}{3}$-homomorphism $(u_1,v_1):h\to \bar r$ as in the above claim for. Let us express $\bar K^*$ as a $\omega_1$-directed colimit of separable Banach spaces $K_i$, $i\in I$, with a colimit cocone $k_i:K_i\to\bar K^*$. Since $\bar K$ is finite-dimensional and $B$ separable, there is $i\in I$, $\tilde r$ and $\bar{v}_1$ such that $k_i\bar{v}_1=v_1$ and $k_i\tilde{r}=\bar{r}$  
\vskip 2 mm
$$
\xymatrix@C=3pc@R=3pc{
A \ar [dd]_{u_1}
  \ar [rr]^{h} &&
B \ar[dd]^{v_1}
  \ar [dl]_{\tilde v_1}  \\
& K_i\ar[dr]^{k_i}  &\\
\bar K \ar[ur]^{\tilde r}
  \ar[rr]_{\bar r}  && \bar K^*
}
$$
\vskip 2 mm\noindent
Since all objects of $\cl$ are injective with respect to $\bar r$, $\tilde r\in\ch$. Thus $K$ is approximate injective with respect to $\tilde r$. Choosing $t:K_i\to K$ with 
$u_2\sim_{\frac{\eps}{3}} t\tilde r$ we obtain
$$
u\sim_{\frac{\eps}{3}}u_2u_1\sim_{\frac{\eps}{3}} t\tilde ru_1\sim_{\frac{\eps}{3}} t\tilde v_1h.
$$
Thus $u\sim_\eps t\bar{v}_1h$. Hence, $r$ is $(\omega,\omega_1)$-pure.
\end{proof}

\begin{rem}
{
\em
We do not know whether classes of Banach spaces closed under products, directed colimits and pure subspaces are precisely classes of Banach spaces approximately injective with respect to a set of morphisms having finite-dimensional domains and codomains.
}
\end{rem}

\section{Saturation}
\begin{defi}\label{apsat}
{
\em
Given a class $\ch$ of isometries in a $\CMet$-enriched category $\ck$, an object $K$ is called \textit{approximately saturated} with respect to $\ch$ if for every $h:A\to B$ in $\ch$, every $\eps>0$ and every isometry $f:A\to K$ there exists an isometry $g:B\to K$ such that $gh\sim_\eps f$.

The class of objects approximately saturated with respect to $\ch$ will be denoted
 as $\Sat_{\ap}\ch$.
}
\end{defi}

\begin{lemma}\label{apsat1}
If $\ck$ has conical colimits, $\ck_0$ is wellpowered and $\ch$ is stable under pushouts then every objects approximately saturated with respect to $\ch$ is approximately injective with respect to $\ch$.
\end{lemma}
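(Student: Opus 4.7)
The plan is to reduce approximate injectivity to approximate saturation by a single pushout, using the identity on $K$ as the auxiliary isometry. Given $h\colon A\to B$ in $\ch$, an arbitrary morphism $f\colon A\to K$, and $\eps>0$, I form the pushout of $h$ along $f$:
$$
\xymatrix@=3pc{
A \ar[r]^h \ar[d]_f & B \ar[d]^{\bar f} \\
K \ar[r]_{\bar h} & P
}
$$
This pushout exists because $\ck$ has conical colimits. Since $h\in\ch$ and $\ch$ is stable under pushouts, $\bar h\colon K\to P$ again belongs to $\ch$; in particular it is an isometry.

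Now the crucial trick is to apply approximate saturation of $K$ not to the arbitrary morphism $f$ (which need not be an isometry, so the definition would not literally apply) but to the identity $\id_K\colon K\to K$, which is trivially an isometry. Approximate saturation of $K$ along $\bar h\in\ch$ yields an isometry $g'\colon P\to K$ with $g'\bar h\sim_\eps\id_K$. Setting $g:=g'\bar f\colon B\to K$, using pushout commutativity and the fact that composition in a $\CMet$-enriched category is nonexpanding (so $\id_K\sim_\eps g'\bar h$ implies $f=\id_K f\sim_\eps g'\bar h f$), I obtain
$$
gh=g'\bar f h=g'\bar h f\sim_\eps\id_K f=f,
$$
so $K$ is approximately injective to $\ch$.

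The main and essentially only delicate point is legitimising the use of saturation when $f$ itself is arbitrary, and the pushout construction is precisely what enables this: it converts the extension problem for $f$ along $h$ into an extension problem for $\id_K$ along $\bar h$, at the cost of enlarging the codomain from $K$ to $P$, which is then collapsed back via $g'$. I do not see a direct role in this argument for the wellpowered hypothesis, so I suspect it is included for a companion or converse statement rather than for the implication proved here.
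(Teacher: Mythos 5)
Your argument is formally correct for the statement as written, but it takes a genuinely different route from the paper's, and the difference is not cost-free. The paper first factors $f=f_2f_1$ with $f_1\colon A\to A_1$ a quotient and $f_2\colon A_1\to K$ an isometry (this factorization, from \cite[Theorem 3.14]{AR}, is exactly where wellpoweredness and the conical colimits are consumed), forms the pushout of $h$ along $f_1$ only, and tests saturation against the isometry $f_2$; you push out along all of $f$ and test against $\id_K$. Both then conclude $gh\sim_\eps f$ by the same nonexpansiveness computation, and your version is undeniably shorter. What the paper's detour buys is that its pushed-out morphism $\bar h\colon A_1\to B_1$ has small domain and codomain: $A_1$ is a quotient of $A$ and $B_1$ a pushout of finite objects, so in the intended application (\ref{gur}(4), where $\ch$ is the class of isometries between finite-dimensional Banach spaces and one deduces that Gurarii spaces are Lindenstrauss) $\bar h$ remains in $\ch$ by \ref{e-push}. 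Your pushout $\bar h\colon K\to P$ has the arbitrary, typically infinite-dimensional, object $K$ as its domain, so it escapes any size-restricted class; your proof therefore only serves classes $\ch$ genuinely closed under pushouts along arbitrary morphisms, which is a substantially stronger condition in practice than the restricted stability the paper's proof actually uses. This also answers your closing suspicion: wellpoweredness is not there for a converse, it funds the (quotient, isometry) factorization whose whole purpose is to keep the pushout inside $\ch$.
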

\begin{proof}
Consider $h:A\to B$ in $\ch$, $\eps>0$ and $f:A\to K$. Following 
\cite[3.14]{AR}, $f=f_2f_1$ where $f_2$ is an isometry. Let 
$$
\xymatrix@=3pc{
A\ar[r]^{h} \ar[d]_{f_1} & B\ar[d]^{\bar{f}_1} \\
A_1 \ar[r]_{\bar{h}} & B_1
}
$$
be a pushout. Since $\bar{h}\in\ch$, there is an isometry $g_2:B_1\to K$ such that
$g_1\bar{h}\sim_\eps f_2$. Then $g_2\bar{f}_1h\sim_\eps f$.
\end{proof}

\begin{rem}\label{gur} 
{
\em
(1) In order to get \ref{adj2} for approximately saturated objects, we would need that
the adjoint isomorphism $\cl(h,UK)\cong\ck(Fh,K)$ preserves and reflects isometries.

(2) If $\ch$ consists of isometries between finite-dimensional Banach spaces then a Banach space $K$ is approximately saturated with respect to $\ch$ iff it is of almost universal disposition for $\ch$ in the sense 
of \cite[3.1]{ASCGM} (see \cite[(H)]{K}). These spaces are called Gurarii spaces.

(3) Like in \ref{lind}, $K$ is Gurarii if and only if every isometry
$K\to L$ satisfies the condition from \ref{u-ext1}. Following \cite[4.2]{ALN}, $K$ is Gurarii if and only if every isometry $K\to L$ is a u-extension.

(4) \ref{apsat1} yields a well-known fact that every Gurarii space is a Lindenstrauss space (see \cite[4.1 and 4.3]{ALN}).
}
\end{rem} 

\begin{rem}\label{asplit}
{
\em
In \cite[5.23(1)]{AR}, a morphism $f:K\to L$ was called approximately split
if for every $\eps>0$ there is $s:L\to K$ such that $sf\sim_\eps f$.
Since an approximately split morphism is pure, it is an isometry.
}
\end{rem}
 
\begin{lemma}\label{apsat2}
A Banach space $K$ is Lindenstrauss if and only if it has an approximately split morphism $K\to L$ to a Gurarii space $L$. 
\end{lemma}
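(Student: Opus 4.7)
\noindent\emph{Sufficiency.} If $f:K\to L$ is approximately split and $L$ is Gurarii, I would use \ref{gur}(4) to see that $L$ itself is Lindenstrauss, hence approximately injective to every isometry between finite-dimensional Banach spaces (\ref{lind}). Given such an isometry $h:A\to B$, a morphism $u:A\to K$, and $\eps>0$, I would lift $fu$ to $v:B\to L$ with $vh\sim_{\eps/2}fu$, then pull back through an approximate section $s:L\to K$ of $f$ (satisfying $sf\sim_{\eps/2}\id_K$) to obtain $svh\sim_\eps u$. A final appeal to \ref{lind} gives that $K$ is Lindenstrauss. This direction is essentially a routine $\eps/2+\eps/2$ chase.

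\noindent\emph{Necessity.} For the converse I would build a Gurarii extension of $K$ by a Fra\"{\i}ss\'e-type construction. Set $L_0=K$; at each stage form $L_{n+1}\supseteq L_n$ by taking $\eps$-pushouts (\ref{e-push}) along all pairs $(h:A\to B,\,u:A\to L_n)$ with $h$ an isometry between finite-dimensional spaces, organised by bookkeeping so that every such pair is eventually treated with arbitrarily small $\eps$. The Banach colimit $L=\overline{\bigcup_n L_n}$ has the almost universal disposition property of \ref{gur}(2), i.e.\ is Gurarii, and the inclusion $K\hookrightarrow L$ is an isometric ideal by \ref{lind}, hence pure by \ref{ideal2}.

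\noindent\emph{The hard step---approximate splitting.} It remains to show that the inclusion $K\to L$ is approximately split. For each $\eps>0$ the ideal property supplies, for every finite-dimensional $B\subseteq L$, a near-retraction $t_B:B\to K$ with $t_B|_{B\cap K}=\id$ and $\pa t_B\pa\leq 1+\eps$. The main obstacle is assembling these local near-retractions into a single bounded operator $s:L\to K$ that is $\eps$-close to the identity on $K$: a naive ultralimit produces a map $L\to K^\cu$ (cf.\ \ref{ultra}) rather than into $K$ itself, and since Lindenstrauss spaces need not be pure injective, there is no canonical retraction $K^\cu\to K$ to post-compose with. I would try to make the $t_B$'s coherent along the chain $(L_n)$ by exploiting the homogeneity of the Fra\"{\i}ss\'e limit $L$, using a back-and-forth or compactness argument to stabilise the local choices. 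The essential point is that the Gurarii-ness of $L$---not merely its being a superspace containing $K$ as an ideal---is what makes the global patching feasible; this is where I expect the main difficulty to lie.
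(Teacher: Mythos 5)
Your sufficiency direction is correct and amounts to the paper's argument made explicit: the paper instead notes that approximately split morphisms are pure (\ref{asplit}), hence by \ref{ideal2} the image of $K$ is an ideal in the Gurarii (hence Lindenstrauss) space $L$, and concludes from the closure of $\Inj_{\ap}\ch$ under ideals (\ref{def}); your $\eps/2+\eps/2$ chase through the approximate section proves that same closure by hand. Either route is fine.

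The necessity direction, however, contains a genuine gap, and it is exactly the one you flag yourself. Producing the Gurarii superspace is not the issue: the paper simply cites \cite[3.6]{GK0} for an isometry $f:K\to L$ with $L$ Gurarii, and your Fra\"{\i}ss\'e-type construction would serve the same purpose; you also correctly obtain from \ref{lind} and \ref{ideal2} that $f$ is an ideal embedding. But the lemma requires, for each $\eps>0$, a single morphism $s:L\to K$ with $sf\sim_\eps\id_K$, and your proposal stops at the family of local near-retractions $t_B:B\to K$ without assembling them into such an $s$. Your instinct that the ideal property alone cannot supply this is sound: $c_0$ is an ideal in $\ell_\infty$, yet any $s:\ell_\infty\to c_0$ with $\pa s|_{c_0}-\id\pa<1$ would yield, after composing with the Neumann-series inverse of $s|_{c_0}$, a bounded projection of $\ell_\infty$ onto $c_0$, contradicting Phillips' theorem. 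So some additional input beyond ``$K$ is an ideal in a superspace'' is indispensable, and ``a back-and-forth or compactness argument to stabilise the local choices'' is a hope rather than an argument. The paper disposes of this step by asserting that \ref{lind} and \ref{ideal} imply $f$ approximately splits; whether or not you find that justification satisfyingly detailed, your write-up explicitly leaves the decisive step unproved, so as it stands it is not a proof of the lemma.
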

\begin{proof}
Following \ref{gur}(4), every Gurarii space is Lindenstrauss. Since
approximately split morphisms are pure (see \ref{asplit}), \ref{def} implies that approximately split subspaces of a Gurarii space are Lindenstrauss.

Conversely, let $K$ be a Lindenstrauss space. Following \cite[3.6]{GK0},
there is an isometry $f:K\to L$ where $L$ is Gurarii. Then \ref{lind} and
\ref{ideal} imply that $f$ approximately splits.
\end{proof}

\end{document}